\documentclass{article}
\usepackage[%
journal=XXX,    
lang=british,   
]{ems-journal}

\theoremstyle{definition}
\newtheorem{definition}{Definition}[section]
\newtheorem{theorem}{Theorem}[section]
\newtheorem{lemma}{Lemma}[section]
\newtheorem{proposition}{Proposition}[section]
\newtheorem{remark}{Remark}[section]
\allowdisplaybreaks

\numberwithin{equation}{section}

\begin{document}

\title{Existence of weak solutions to two-phase Stefan problems for parabolic partial differential equation system}
\titlemark{Existence of weak solutions to the two-phase Stefan problem}


%

\emsauthor{1}{
	\givenname{Hana}
	\surname{Kakiuchi}
	\mrid{1717041}
	\zblid{kakiuchi.hana}
	\orcid{0009-0002-2924-4116}}{H. Kakiuchi}

\Emsaffil{1}{
	\department{Division of Mathematical and Physical Sciences}
	\organisation{Japan Women's University}
	\rorid{04gpcyk21}
	\address{2-8-1, Mejirodai, Bunkyo-ku}
	\zip{112-8681}
	\city{Tokyo}
	\country{Japan}
	\affemail{m1816022kh@ug.jwu.ac.jp}}

\classification[35R35]{35K55}

\keywords{Free boundary problem, Stefan problem, weak solution, bread baking process}

\begin{abstract}
In our previous results, we have proposed a new free boundary problem representing the bread baking process in a hot oven. The problem consists of conservation laws for the internal energy and water mass, the Stefan condition, the boundary and initial conditions. We note that the boundary condition for   the water content contains the temperature at the boundary. Accordingly,  we do not expect a strong solution for $W^{1,2}$-initial data. To address this difficulty, we established strong solvability by assuming stricter conditions for the initial functions in \cite{AK2}. In contrast, this paper aims to prove the existence of a weak solution, when these assumptions for initial functions are relaxed.
\end{abstract}

\maketitle


\section{Introduction} 
We consider the following free boundary problem as a mathematical model for the bread baking process which we derived 
based on \cite{PS} (see \cite{AK, AK2} for details). In the problem we find a triplet $(u, w, e)$ of the temperature field $u$ and the water content $w$ on $Q(T) := (0,T) \times (0,1)$, the evaporation front $x=e(t) \mbox{ for } 0 < t < T$, where $T > 0$ is a given time, $t$ is time variable and the bread occupies the one dimensional region $(0,1)$.

Here, we note that $u$ is defined by $u = \theta - \theta_c$, where $\theta$ is the temperature and $\theta_c$ is phase transition temperature, and the common domain of $u$ and $w$ consists of the crumb region $Q_l(T, e) =  \{ (t,x) \in Q(T)| 0 < x < e(t) \mbox{ for } t \in (0,T)\}$ and the crust region $Q_a(T, e) =  \{ (t,x) \in Q(T)|  e(t) < x < 1 \mbox{ for } t \in (0,T)\}$ (see \cite{AK} for details). Moreover, the triplet $(u, w, e)$ satisfies:
 \begin{align}
& c_l \frac{\partial u}{\partial t} = k_l \frac{\partial^2 u}{\partial x^2} \quad  \mbox{ in } Q_l(T, e), \quad c_a \frac{\partial u}{\partial t} = k_a \frac{\partial^2 u}{\partial x^2} \quad \mbox{ in } Q_a(T, e),  \label{EQl}  \\
& \frac{\partial w}{\partial t} = d_l \frac{\partial^2 w}{\partial x^2}  \quad \mbox{ in } Q_l(T, e),  \quad \frac{\partial w}{\partial t} = d_a \frac{\partial^2 w}{\partial x^2}  \quad  \mbox{ in } Q_a(T, e), \label{EQa} \\
& \frac{\partial u}{\partial x}(t,0) = 0, \quad  u(t,e(t)) = 0  \quad \mbox{ for } 0 < t < T,  \label{BCl} \\
& - k_a \frac{\partial u}{\partial x}(t,1) = h( u(t,1) + \theta_c - u_b(t)) + \sigma( (u(t,1) + \theta_c)^4 - u_b(t)^4)\notag\\
&\hspace{80mm} \mbox{ for } 0 < t < T, \label{BCau} \\
& - d_a \frac{\partial w}{\partial x}(t,1) = b_1p(u(t,1) + \theta_c) -  b_2 p(u_b(t))  \quad \mbox{ for } 0 < t < T,  \label{BCaw} \\
& l w(t,e(t))  e'(t) = k_l \frac{\partial u}{\partial x}(t, e(t)-) -   k_a \frac{\partial u}{\partial x}(t, e(t)+)   \quad \mbox{ for } 0 < t < T,  \label{FBP1} \\
&   \frac{\partial w}{\partial x}(t, 0) = 0,  \quad  d_l \frac{\partial w}{\partial x}(t, e(t)-) = d_a \frac{\partial w}{\partial x}(t, e(t)+) \quad \mbox{ for } 0 < t < T,  \label{FBP2} \\
& w(t,e(t)-) = w(t,e(t)+) \quad \mbox{ for } 0 < t < T,  \label{FBP3} \\
&e(0) = e_0, u(0) = u_0, w(0) = w_0\quad \mbox{ on } [0,1],\label{IC}
\end{align} 
where $c_l$ and $c_a$ are the specific heats, $k_l$ and $k_a$ are the thermal conductivities, and $d_l$ and $d_a$ are the water diffusion coefficients in the crumb and the crust regions, respectively. Also, $h$ is the heat transfer constant and $\sigma$ is the Stefan-Boltzman constant, $u_b$ is temperature of the hot air in the oven and a given function on $[0,T]$,  $b_1$ and $b_2$ are positive constants,  $p$ is a monotone increasing and continuous function on $\mathbb{R}$, and $l$ is the latent heat. Moreover, $e_0$ is the initial position of the free boundary, $u_0$ is initial temperature field, and $w_0$ is initial  water content. Throughout this paper, the system \eqref{EQl}-\eqref{IC} is denoted by P $=$ P$(u_0, w_0)$.
\par Details of the physical background are scribed in \cite{AK}. Here, we explain the derivation of the system \eqref{EQl}-\eqref{IC}, briefly. Equation \eqref{EQl} is the heat equation and equation \eqref{EQa} is the diffusion equation for water. By symmetry of the bread, we impose the homogeneous Neumann boundary conditions \eqref{BCl} and \eqref{FBP2} at $x=0$ and the boundary conditions \eqref{BCau} and \eqref{BCaw} at $x=1$ are assumed in \cite{PS}. In this paper,  we call the equation \eqref{FBP1} the generalized Stefan condition, since the water content $w$ appears as the coefficient of the time derivative of the free boundary. Moreover, \eqref{FBP2} is  a type of  the transmission condition, and  \eqref{FBP3} means that $w(t)$ is continuous at the free boundary for each $t \in [0,T]$.
\par For the standard Stefan problem, there exists a strong solution $(u,e)$ under the condition $u_0 \in W^{1,2}(0,1)$ with some sign properties. Here, we note that the boundary condition \eqref{BCaw} contains $u(\cdot,1)$ at $x=1$, and for existence of the strong solution $w$, it is necessary that the time derivative $u_t(\cdot,1)$ is integrable. However, even if $u$ is the strong solution, the integrability of $u_t(\cdot,1)$ does not guaranteed in usual. For this difficulty, by the assumption  $u_0 \in W^{1,2}(0,1) \cap W^{2,2}(e_0,1)$ and $w_0 \in W^{1,2}(0,1)$, we obtained the integrability and proved the strong solvability in \cite{AK2}. On the other hand, it is quite interesting to discuss solvability under the assumption $u_0 \in W^{1,2}(0,1)$. Thus, the aim of this paper is to establish existence of weak solutions in case $u_0 \in W^{1,2}(0,1)$ and $w_0 \in L^2(0,1)$. We note that we can relax the regularity assumption on $u_0$ and $w_0$ by imposing suitable conditions that allow us to apply the comparison principle (see Remark \ref{rem1} in detail). The uniqueness of weak solutions to our free boundary problem is not easy, because of the generalized Stefan condition and lack of regularity for $u$, $w$ and $e$.
\section{Main result}
In this section, we give a definition for a weak solution of P and a theorem concerned with  the existence of the solution.
\begin{definition}\label{def1}
For given time $T$ a triplet $(u, w, e)$ of functions $u$, $w$ and $e$ is a solution of P on $[0,T_0]$ for  $0 < T_0 \leq T$ if $(u, w, e)$  satisfies (S1)-(S4).\\
(S1) $e \in W^{1, 4}(0, T_0)$ and $0 < e < 1  \mbox{ on }  [0, T_0]$.\\
(S2) $u \in W^{1,2}(0,T_0;L^2(0, 1)) \cap L^{\infty} (0, T_0; W^{1,2}(0, 1))$, $u_{xx}  \in L^2(Q_l(T_0, e))$, $L^2(Q_a(T_0, e))$.\\
 (S3) $w \in L^{\infty}(0,T_0;L^2(0, 1)) \cap L^{2} (0, T_0; W^{1,2}(0, 1))$.\\
 (S4) \eqref{EQl}, \eqref{BCl}, \eqref{BCau}, \eqref{FBP1}, $e(0)=e_0$, $u(0)=u_0$ on $[0,1]$ and the following weak formulation for $w$ hold:
  \begin{align}
& - \int^{T_0}_0  \int^1_0 w \eta_t dx dt + d_l \int^{T_0}_0  \int^e_0 w_x \eta_x dx dt +  d_a \int^{T_0}_0  \int^1_e w_x \eta_x dx dt  \notag \\ 
& =    \int^1_0 w_0(x) \eta(0,x) dx - \int^{T_0}_0 \{ b_1 p (u(t,1) + \theta_c) - b_2 p(u_b(t))\} \eta(t,1) dt \label{weak}
 \end{align} 
 \hspace{15mm} for $\eta \in W^{1,2}(0, T_0; L^2(0, 1)) \cap L^{2}(0, T_0; W^{1,2}(0,1))$ and $\eta(T_0)=0$.
\end{definition}

\begin{theorem}\label{th1}
Assume\\
(A1) $p \in C^1({\mathbb R})$, $0 \leq p, p' \leq M_p$ on ${\mathbb R}$,  where $M_p$ is a positive constant.\\
(A2)   $0 <  e_0 < 1$.\\
(A3) $u_0 \in W^{1,2}(0, 1)$, $u_0 \geq 0$ on $[e_0,1]$, $u_0 \leq 0$ on  $[0,e_0]$.\\
(A4) $w_0  \in L^2(0,1)$, $w_0 \geq \delta_1$ a.e. on $[0,1]$, where $\delta_1$ is a positive constant.\\
(A5) $u_b$ is a positive constant with $u_b \geq u_0 +  \theta_c \mbox{ on } [0,T]$ and $b_1 \leq b_2$.\\
Then,  P has at least one solution $(u, w, e)$ on $[0,T_0]$ for some $T_0 \in (0,T]$. 
\end{theorem}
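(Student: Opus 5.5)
The plan is to approximate the rough data by smooth data and then pass to the limit, using the strong solvability result of \cite{AK2} at the approximate level. We choose $u_{0n} \in W^{1,2}(0,1)\cap W^{2,2}(e_0,1)$ and $w_{0n}\in W^{1,2}(0,1)$ such that $u_{0n}\to u_0$ in $W^{1,2}(0,1)$ and $w_{0n}\to w_0$ in $L^2(0,1)$. We also require that these approximations keep the sign conditions (A3) with respect to $e_0$, that $u_{0n}+\theta_c\le u_b$, and that $w_{0n}\ge\delta_1$. For $u_{0n}$ we mollify separately on $[0,e_0]$ and $[e_0,1]$, keeping $u_{0n}(e_0)=0$, and truncate by $u_b-\theta_c$. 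For $w_{0n}$ we mollify and then take $\max\{\cdot,\delta_1\}$. By \cite{AK2}, the problem P$(u_{0n},w_{0n})$ then has a strong solution $(u_n,w_n,e_n)$ on some interval.

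The key step is to obtain estimates uniform in $n$ and an existence time $T_0$ independent of $n$. The estimates for $u_n$ should use only $\|u_{0n}\|_{W^{1,2}}$. First, the comparison principle together with (A5) gives $u_n\le 0$ in the crumb region, $0\le u_n\le u_b-\theta_c$ in the crust region, and hence $0\le u_n(t,1)+\theta_c\le u_b$. As a consequence, the radiation term in \eqref{BCau} is bounded and Lipschitz on the relevant range.

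Next, we need a positive lower bound for $w_n$ on the free boundary. Since $b_1\le b_2$ and $p$ is increasing, \eqref{BCaw} gives $-d_aw_{n,x}(t,1)=b_1p(u_n(t,1)+\theta_c)-b_2p(u_b)\le0$, so the boundary flux is inward. The maximum principle for the transmission problem then yields $w_n\ge\delta_1$. Testing with $w_n$ and using (A1) gives a uniform bound for $w_n$ in $L^\infty(0,T;L^2)\cap L^2(0,T;W^{1,2})$.

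We then turn to the energy estimate for $u_n$. We multiply \eqref{EQl} by $u_{n,t}$ on each phase and integrate. The free-boundary term produced by $u_n(t,e_n(t))=0$ contributes $-\tfrac12 e_n'(t)\bigl(u_{n,x}(e_n-)^2-u_{n,x}(e_n+)^2\bigr)$-type quantities. Via \eqref{FBP1} with $w_n\ge\delta_1$, these are controlled by $|e_n'|^3$, or equivalently by $|e_n'|^2\,|u_{n,x}(e_n\pm)|$. We bound $u_{n,x}(e_n\pm)$ through interpolation: $|u_x(e\pm)|^2\le C\|u_x\|_{L^2}\|u_{xx}\|_{L^2}+C\|u_x\|^2$.

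Combining this with the equation $c u_t=k u_{xx}$ and with $|e_n'|\le C\delta_1^{-1}(|u_{n,x}(e_n-)|+|u_{n,x}(e_n+)|)$, we expect a differential inequality of the form
\[
\frac{d}{dt}\|u_{n,x}\|^2+\|u_{n,t}\|^2+\|u_{n,xx}\|^2_{\text{phases}}\le C(1+\|u_{n,x}\|^2)^{\gamma},
\]
with some $\gamma>1$. This gives a uniform bound on a short interval $[0,T_0]$ depending only on $\|u_0\|_{W^{1,2}}$. On the same interval, $e_n$ stays in $(0,1)$ uniformly, and $\int_0^{T_0}|e_n'|^4\,dt\le C\int_0^{T_0}|u_{n,x}(e_n\pm)|^4\,dt\le C\int_0^{T_0}\|u_{n,xx}\|^2\,dt$, which yields the $W^{1,4}$ bound in (S1). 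Closing this estimate, namely the free-boundary term coupled with the variable coefficient $w_n(t,e_n(t))$ in \eqref{FBP1}, is the main obstacle. Crucially, it must not use $u_{n,t}(\cdot,1)$, which was needed in \cite{AK2} only to get $w_n$ strong.

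Finally, we pass to the limit. By compactness, $e_n\to e$ uniformly and weakly in $W^{1,4}$, and $u_n\to u$ weakly-$*$ in the spaces of (S2) and strongly in $C([0,T_0];C[0,1])$ by Aubin--Lions. It follows that $u_n(\cdot,1)\to u(\cdot,1)$ uniformly, so the boundary data in \eqref{weak} converge. We handle the phase-wise second derivatives by transforming each phase to a fixed domain via $y=x/e_n(t)$. The weak form \eqref{weak} passes to the limit using the weak convergence of $w_{n,x}$ together with $\chi_{\{x<e_n\}}\to\chi_{\{x<e\}}$ strongly.

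For \eqref{FBP1} we use its integrated form. The trace $w_n(t,e_n(t))$ converges only in a weak sense, so instead of passing to the limit in \eqref{FBP1} pointwise, we rewrite the Stefan condition as an energy identity, obtained by integrating the enthalpy $c u + l w\chi_{\{x>e\}}$-type balance against test functions. We pass to the limit there, using the strong $L^2(L^2)$ convergence of $w_n$ given by Aubin--Lions with $w_{n,t}$ bounded in $L^2(W^{1,2})^*$. We then recover \eqref{FBP1} a.e., using the traces from $w\in L^2(W^{1,2})$ and $u_{xx}\in L^2$ on each phase.
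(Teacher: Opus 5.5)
\textbf{There is a genuine gap in the passage to the limit in the Stefan condition \eqref{FBP1}.} You assume the trace $w_n(\cdot,e_n(\cdot))$ converges only weakly. That premise is wrong, and the enthalpy workaround you propose in its place does not close.

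For $H_n=cu_n+lw_n\chi_{\{x>e_n\}}$ the interface terms $lw_n(t,e_n)e_n'\phi(t,e_n)$ do cancel. In exchange, however, the balance contains $l\int_0^{T_0}\int_{e_n}^1 w_{n,t}\phi\,dx\,dt$. This is $w_{n,t}$ tested against the discontinuous function $\chi_{\{x>e_n\}}\phi\notin W^{1,2}(0,1)$. Your only uniform information on $w_{n,t}$ is a bound in $L^2(0,T_0;W^{1,2}(0,1)^*)$. Expanding the term via \eqref{EQa} produces the interface flux $d_aw_{n,x}(t,e_n(t)+)\phi(t,e_n(t))$, for which no uniform bound exists. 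So the limit of this term cannot be identified, and strong $L^2(L^2)$ convergence of $w_n$ does not help with it.

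If you instead drop $lw\chi$ and use the balance for $cu_n$ alone, the source term is $lw_n(\cdot,e_n)e_n'\,\delta_{x=e_n}$. Since $e_n'$ converges only weakly, you are back to needing strong convergence of the trace.

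\textbf{The missing idea is that the trace does converge strongly in $L^2(0,T_0)$;} this is \eqref{con-wn} in Lemma \ref{convergence}. Split $w_n(t,e_n)-w(t,e)$ into two pieces:
\begin{itemize}
\item $|w_n(t,e_n(t))-w_n(t,e(t))|^2\le|e_n(t)-e(t)|\,\|w_{n,x}(t)\|_{L^2}^2$;
\item $w_n(t,e)-w(t,e)$, controlled by the trace inequality $|f(a)|^2\le\|f\|_{L^2}^2+2\|f\|_{L^2}\|f_x\|_{L^2}$ applied to $f=w_n(t)-w(t)$.
\end{itemize}
After integrating in time, both pieces tend to zero. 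This uses uniform convergence of $e_n$, the Aubin--Lions strong convergence in $L^2(Q(T_0))$ that you already have, and the uniform $L^2(0,T_0;W^{1,2})$ bound. The same inequality gives an $L^4(0,T_0)$ bound on $w_n(\cdot,e_n)$. Combined with $e_n'\rightharpoonup e'$ in $L^4$, this yields $w_n(\cdot,e_n)e_n'\rightharpoonup w(\cdot,e)e'$. You can then pass to the limit in \eqref{FBP1} directly, once $u_{n,x}(\cdot,e_n\pm)\to u_x(\cdot,e\pm)$. Weak convergence suffices there, and your $y=x/e_n$ transformation provides it. Your treatment of \eqref{weak} via strong convergence of $\chi_{\{x<e_n\}}\eta_x$ is fine, and simpler than the paper's.

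Three smaller points:
\begin{itemize}
\item Your superlinear Gronwall route for the energy estimate does close, but the free-boundary term is not an obstacle: it has a sign. Since $u_{n,x}(t,e_n\pm)\ge0$, it equals $\frac{l}{2}w_n(t,e_n)|e_n'|^2\bigl(k_lu_{n,x}(t,e_n-)+k_au_{n,x}(t,e_n+)\bigr)\ge\frac{l^2}{2}w_n(t,e_n)^2|e_n'|^3$. This gives \eqref{en3} with smallness of time needed only to keep $e_n$ away from $0$ and $1$.
\item To invoke the strong solvability result, your $u_{0n}$ must satisfy the compatibility condition $-k_au_{0n,x}(1)=g(u_{0n}(1))$, which you do not arrange. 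Also, truncation by $u_b-\theta_c$ destroys $W^{2,2}$ regularity.
\item You still need a continuation argument to show that the approximate strong solutions exist on the common interval $[0,T_0]$.
\end{itemize}
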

\begin{remark}\label{rem1}
We contrast the assumptions on the initial and boundary functions imposed here with those in our previous results \cite{AK} and \cite{AK2}. In \cite{AK}, we established the existence and uniqueness of strong solutions to the problem with the approximated boundary condition, and in \cite{AK2}, under (A1), (A2), (A3), (A4)', (A5)' and (A6)', the strong solvability has been obtained as \cite[Theorem 2.1]{AK2}, where (A4)' - (A6)' are:\\
(A4)' $w_0  \in W^{1,2}(0,1)$, $w_0(e_0) > 0$.\\
(A5)' $u_b  \in W^{1,2}(0,T)$, $u_b \geq  \theta_c \mbox{ on } [0,T]$.\\
(A6)' $u_0 \in W^{2,2}(e_0,1)$, $u_{0x} = 0$, $-k_a u_{0x}(1) =  h( u_{0 }(1) + \theta_c - u_b(0)) + \sigma( (u_{0 }(1) + \theta_c)^4 - u_b(0)^4)$.\\
To compensate for loss of regularity in $w$ by omitting (A6)', we impose (A4) and (A5), which provide the desired lower estimate for $w$. Based on this lower estimate, we can obtain uniform estimates for the free boundary $e$ with \eqref{FBP1}. The energy inequality \eqref{en3} is often employed in the analysis of two-phase Stefan problem. However, because $w$ is defined as a weak solution, estimate \eqref{en3} alone is insufficient to establish the convergence of the approximate solutions (see Section \ref{proof-prop1}). The key to proving the convergence lies in improving the regularity of the free boundary from $W^{1,3}$ to $W^{1,4}$ by applying the Gagliardo-Nirenberg inequality.
\end{remark}
\par Throughout this paper,  for simplicity we put  $g:  {\mathbb R} \to {\mathbb R}$ as follows:
\begin{eqnarray*}
g(r) = h (r + \theta_c - u_b) + \sigma ((r + \theta_c)^4 - u_b^4) \quad \mbox{for } t \in [0,T] \mbox{ and } r \in \mathbb{R}.
\end{eqnarray*}
To apply our previous result, we construct an approximate sequence for the initial function $u_0$ such that  assumptions of Theorem 2.1 in \cite{AK2} hold.
\begin{lemma}\label{lem1}
Assume (A3) - (A5). Then, there exist  approximate sequences $\{u_{0n}\}$ and $\{w_{0n}\}$ to $u_0$ and $w_0$, respectively, such that
\begin{align*}
& \{u_{0n}\} \subset W^{1,\infty}(e_0, 1) \cap W^{2,2}(x_0, 1) \quad \mbox{for some } x_0 \in (e_0,1),\\
&  -k_a u_{0nx}(1) = g(0,u_{0n}(1))\\
& u_{0n} \geq 0 \quad \mbox{on } [e_0,1], \quad u_{0n} \leq 0 \quad \mbox{on } [0, e_0],\\
&u_b \geq \theta_c + u_{0n} \quad \mbox{on } [0,1] \quad \mbox{for large }n,\\
& \{w_{0n}\} \subset W^{1,2}(0, 1), \quad w_{0n} \geq \delta_1 \quad \mbox{on } [0, 1] \mbox{ for } n \geq1,\\
&u_{0n} \to u_0 \quad \mbox{in } W^{1,2}(0,1) \mbox{ and } w_{0n} \to w_0 \quad \mbox{in } L^2(0,1)  \mbox{ as } n  \to \infty.
\end{align*}
  \end{lemma}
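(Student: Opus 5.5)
The plan is to build $w_{0n}$ and $u_{0n}$ separately by explicit constructions.

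For $w_{0n}$ I will use a standard density argument that respects the lower bound. Set $v_0 := w_0 - \delta_1 \ge 0$. Take $v_n \in C^\infty([0,1])$ with $v_n \to v_0$ in $L^2(0,1)$, for instance by mollifying $v_0$ after extending it by reflection. Because $v_0 \ge 0$, these mollifications are already nonnegative. If a different approximation is used, replace $v_n$ by $\max(v_n,0)$, which lies in $W^{1,2}$ and still converges, since $|\max(v_n,0) - v_0| \le |v_n - v_0|$. Then $w_{0n} := \delta_1 + v_n$ is in $W^{1,2}(0,1)$, satisfies $w_{0n} \ge \delta_1$, and converges to $w_0$ in $L^2$.

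For $u_{0n}$ the idea is to keep $u_0$ unchanged on $[0,x_n]$ and modify it only on a shrinking interval $[x_n,1]$ with $x_n \uparrow 1$.

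\textbf{Step 1: approximate on the crust.} On $[e_0,1]$, approximate $u_0$ in $W^{1,2}(e_0,1)$ by functions $\tilde u_n \in W^{1,\infty}(e_0,1)$ with $\tilde u_n \ge 0$, $\tilde u_n(e_0) = 0$ and $\tilde u_n \le u_b - \theta_c$. One way is to take piecewise linear interpolants of $u_0$ on fine grids that include $e_0$. Interpolation preserves both the sign and the upper bound $u_b - \theta_c$, because these are pointwise constraints on the nodal values and the interpolant is a convex combination of them. Interpolants also converge in $W^{1,2}$, since $u_0$ is absolutely continuous with $u_{0x} \in L^2$. Gluing with $u_0$ on $[0,e_0]$ keeps the sign conditions there and keeps continuity at $e_0$.

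\textbf{Step 2: correct near $x=1$.} Fix $x_n \in (e_0,1)$ with $1 - x_n \to 0$. On $[x_n,1]$ replace $\tilde u_n$ by a $C^2$ function $\phi_n$ that satisfies:
\begin{itemize}
\item it matches the value and derivative of $\tilde u_n$ at $x_n$;
\item its endpoint value is $\phi_n(1) = \tilde u_n(1) =: a_n$;
\item it obeys the Robin condition $-k_a\phi_n'(1) = g(a_n)$.
\end{itemize}
A cubic Hermite interpolant on $[x_n,1]$ does this. This gives $W^{2,2}(x_n,1)$ regularity and the required boundary condition. Taking $x_0 = x_n$, which depends on $n$, matches what the statement allows.

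\textbf{Main obstacle.} The key estimate is to show $\|\phi_n - \tilde u_n\|_{W^{1,2}(x_n,1)} \to 0$ while the sign and upper-bound constraints survive. Write $\varepsilon = 1 - x_n$. The Hermite interpolant's derivative is bounded by $C(|\tilde u_n'(x_n)| + |g(a_n)|/k_a)$, which is bounded uniformly in $n$, since $0 \le a_n \le u_b - \theta_c$ and $\tilde u_n$ can be chosen with controlled local Lipschitz constant. Hence $\int_{x_n}^1 |\phi_n'|^2 \le C\varepsilon \to 0$, and $\int_{x_n}^1 |\tilde u_n'|^2 \to 0$ holds by choosing $\varepsilon$ small relative to the grid. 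Sup-norm deviations are $O(\varepsilon)$.

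The delicate point is keeping $0 \le \phi_n \le u_b - \theta_c$.
\begin{itemize}
\item \emph{Upper bound.} By (A5), $u_0 + \theta_c \le u_b$, so $g(a_n) \le 0$ whenever $a_n + \theta_c \le u_b$. This gives $\phi_n'(1) \ge 0$, meaning the function increases towards $1$. Since $\phi_n(1) = a_n$, this does not push $\phi_n$ above $a_n$ near the endpoint. If necessary, first replace $u_0$ by $(1 - 1/n)u_0$ to gain a strict margin. The $O(\varepsilon)$ overshoot is then absorbed for large $n$, which is why the statement says "for large $n$".
\item \emph{Lower bound.} Nonnegativity is handled the same way, or by replacing $\phi_n$ with $\max(\phi_n,0)$ away from the endpoint. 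Alternatively, note that $a_n > 0$ can be arranged by adding a small nonnegative bump, after which the $O(\varepsilon)$ deviation cannot reach zero.
\end{itemize}

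Finally, $W^{1,\infty}(e_0,1)$ regularity is immediate, since the function is piecewise linear plus $C^2$.
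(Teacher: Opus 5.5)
Your construction proves a weaker statement than the lemma on one point. The lemma asks for a \emph{single} $x_0\in(e_0,1)$ with $u_{0n}\in W^{2,2}(x_0,1)$ for all $n$. In the paper $x_0$ is fixed before $n$, and this is used in the proof of Proposition \ref{prop1}, where $\delta_0$, and through it $T_0$, is chosen with $x_0<1-2\delta_0$. Reading the statement as allowing $x_0=x_n\uparrow 1$ is therefore a misreading. Moreover, your $u_{0n}$ in general lies in no $W^{2,2}(x_0,1)$ with $x_0$ fixed, because on $(x_0,x_n)$ it is a piecewise linear interpolant whose derivative jumps at every node.

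The repair keeps your strategy. On $[x_i-\rho,x_i+\rho]$ around each interior node $x_i$, replace the interpolant by the quadratic whose B\'ezier control values are the interpolant's values at $x_i-\rho$, $x_i$, $x_i+\rho$. The result is $C^1$ and piecewise quadratic. Its slope is a convex combination of the two adjacent slopes and its values are convex combinations of nodal values, so $0\le\cdot\le u_b-\theta_c$ survives. It differs from the interpolant in $W^{1,2}$ by $O((\rho N_n)^{1/2}L_n)$, where $N_n$ is the number of nodes and $L_n$ the Lipschitz constant. Matching your Hermite cubic in $C^1$ at $x_n$ then gives $u_{0n}\in W^{2,2}(x_0,1)$ for every $x_0\in(e_0,1)$.

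Two further steps need tightening. First, your derivative bound for $\phi_n$ omits the term $|\tilde u_n(x_n)-a_n|/\varepsilon$. Neither this nor $|\tilde u_n'(x_n)|$ is bounded uniformly in $n$, since $u_0'$ need not be bounded near $1$. What does hold, with $G=\max_{0\le r\le u_b-\theta_c}|g(r)|$, is $|\phi_n'|\le C(L_n+G/k_a)$ and $|\phi_n-\tilde u_n|\le C\varepsilon_n(L_n+G/k_a)$ on $[x_n,1]$. So choose $\varepsilon_n$ \emph{after} $\tilde u_n$; this gives both the $W^{1,2}$ convergence and any prescribed sup-deviation.

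Second, for nonnegativity, of the options you list only the bump that lifts $a_n$ above $0$ works. If $u_0(1)=0$ and $u_b>\theta_c$, then $g(a_n)<0$ forces $\phi_n'(1)>0$ with $\phi_n(1)=0$. So $\phi_n<0$ immediately left of $1$, and truncating there destroys the Robin condition. The lift must also be coordinated with the upper margin. After scaling, $\tilde u_n\le u_b-\theta_c-\mu_n$ with $\mu_n=(u_b-\theta_c)/n$. Add $\frac{\mu_n}{2}\chi$ for a fixed smooth $\chi$ with $0\le\chi\le1$, $\chi=0$ near $e_0$ and $\chi=1$ near $1$. Then take $\varepsilon_n$ so small that the Hermite deviation stays below $\mu_n/2$. (If $u_b=\theta_c$, then (A3) and (A5) force $u_0=0$ on $[e_0,1]$ and $g(0)=0$, so nothing needs correcting.)

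With these repairs your route is a sound alternative to the paper's. The paper keeps $u_0$ on $[0,x_0]$ and mollifies a reflection of $u_0$ (odd about $x_0$, even about $1$) on $(x_0,1]$, so that the slope at $1$ vanishes. It then adds an explicit $v_{0n}$ supported in $[1-2/n,1]$ with $v_{0n}(1)=0$; this gives a fixed $x_0$ for free. However, that construction has three weak points:
\begin{itemize}
\item it leaves $u_{0n}=u_0$ on $[e_0,x_0]$, so $W^{1,\infty}(e_0,1)$ fails for general $u_0\in W^{1,2}(0,1)$;
\item it invokes $u_0+\theta_c<u_b$, which is stronger than (A5);
\item it takes $v_{0n}\ge0$ with $v_{0n}'(1)=g$, which gives $-k_au_{0nx}(1)=-k_ag$ rather than $g$.
\end{itemize}
Your interpolation on all of $[e_0,1]$, the scaling, and the lift address exactly these points. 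The lift is unavoidable when $u_0(1)=0$, since any correction vanishing at $1$ with slope $-g/k_a>0$ is negative just left of $1$.
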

\begin{proof} 
Let  $x_0 \in (e_0,1)$, and put 
\begin{align*}
\overline{u}_0(x) = 
\begin{cases}
&2u_0(x_0) - u_0(2 x_0 - x)\quad \mbox{for } e_0 \leq x < x_0, \\
&u_0(x) \quad  \mbox{for } x_0 \leq x \leq 1,\\
&u_0(2-x) \quad \mbox{for } 1 < x < 2.
\end{cases}
\end{align*}
It is clear that $\overline{u}_0 \in W^{1,\infty}(e_0,1)$, $J_{\varepsilon} \ast \overline{u}_0 \in C^{\infty}(\mathbb{R})$ for $\varepsilon > 0$, where $J_{\varepsilon}$ is the standard mollifier in $\mathbb{R}$ and $``\ast"$ indicates the convolution. Here, we put
\begin{align*}
\overline{u}_{0n}(x) = 
\begin{cases}
&u_0(x) \quad  \mbox{for } 0 \leq x \leq x_0,\\
&(J_{1/n} \ast \overline{u}_0)(x) \quad \mbox{for } x_0 < x \leq 1,
\end{cases}
\end{align*}
for $n \geq n_0$, where $2/n_0 < x_0 - e_0$. Easily, we get $\overline{u}_{0n} \in W^{1,\infty}(0,1)$ for $n \geq n_0$, because of $(J_{1/n} \ast \overline{u}_0)(x_0) = u_0(x_0)$ for $n \geq n_0$. Obviously,  $\overline{u}_{0n} \geq 0$ on $[e_0,1]$, $\overline{u}_{0n} \leq 0$ on  $[0, e_0]$, $\overline{u}_{0nx}(1) = 0$ and $\overline{u}_{0n} \in W^{2,2}(x_0,1)$ for $n \geq n_0$. Here, we define a function $v_{0n}$ on $[0,1]$ by
\begin{align*}
v_{0n}(x) = 
\begin{cases}
&0 \quad \mbox{for } 0 \leq x \leq 1-\frac{2}{n}, \\
&{4n} g(\overline{u}_{0n}(1)) \left( \cos ({n \pi}(x - (1-\frac{2}{n}))) - 1\right) \quad  \mbox{for } 1-\frac{2}{n} \leq x \leq 1-\frac{1}{n},\\
&\frac{1}{2}g(\overline{u}_{0n}(1))\left( n(x - (1-\frac{1}{n}))^2 - \frac{1}{n} \right) \quad \mbox{for } 1-\frac{1}{n} < x \leq 1,
\end{cases}
\end{align*}
for $n \geq n_0$. From this definition we see that $v_{0n} \geq 0$ on $[e_0,1]$ and $v_{0n} \in W^{2,2}(0,1)$ for $n \geq n_0$. Indeed, by $\overline{u}_{0n}(1) \leq u_b - \theta_c$, we have
\begin{align*}
g(\overline{u}_{0n}(1)) &= h (\overline{u}_{0n}(1) + \theta_c - u_b) + \sigma ((\overline{u}_{0n}(1)+ \theta_c)^4 - u_b^4)\\
& \leq 0 \quad \mbox{for } n \geq n_0.
\end{align*}
Accordingly, it holds $v_{0n} \geq 0$ on $[e_0,1]$ for $n \geq n_0$. We can get $v_{0n} \in W^{2,2}(0,1)$ for $n \geq n_0$, easily. Moreover, we can show that $v_{0n} \to 0$ in $C([0,1])$ as $n \to \infty$. In fact, first it is clear that $\{ \overline{u}_{0n}(1) \}$ is bounded in $\mathbb{R}$, also $\{ g(\overline{u}_{0n}(1)) \}$ is bounded. Next, let $n \geq n_0$. Then, we see that for $1-\frac{2}{n} \leq x \leq 1-\frac{1}{n}$, $|v_{0n}(x)| \leq \frac{1}{2n} |g(\overline{u}_{0n}(1))|$, and for $1-\frac{1}{n} < x \leq 1$
\begin{align*}
|v_{0n}(x)| &\leq \frac{1}{2} |g(\overline{u}_{0n}(1))| \left| n(x - (1-\frac{1}{n}))^2 - \frac{1}{n} \right|\\
&\leq \frac{1}{2} |g(\overline{u}_{0n}(1))| \left| n(1 - (1-\frac{1}{n}))^2+ \frac{1}{n} \right|\\
&\leq \frac{1}{n} |g(\overline{u}_{0n}(1))|.
\end{align*}
Thus, $v_{0n} \to 0$ in $C([0,1])$ as $n \to \infty$ is true. Finally, we set  $u_{0n} = \overline{u}_{0n} + v_{0n}$ for $n \geq n_0$.  To complete the proof, it is sufficient to show that $u_{0n} + \theta_c \leq u_b$ on $[0,1]$ for $n \geq n_1$, where $n_1$ is some positive integer with $n_1 \geq n_0$. Here, on account of the assumption $u_{0} + \theta_c < u_b$ on $[0,1]$, we can take $m \in \mathbb{R}$ and $n_1 \geq n_0$ such that $u_{0} + \theta_c \leq m < u_b$ and $0 \leq v_{0n} \leq u_b - m$ for $n \geq n_1$. Since the existence of $\{w_{0n}\}$ satisfying the desired conditions is easily proved, this lemma is proved.
\end{proof}
The next lemma guarantees the existence of strong solutions of P$(u_{0n}, w_{0n})$ locally in time for $n$.
  \begin{lemma}\label{lem-strong}(cf. \cite[Theorem 2.2 and Lemma 3.2]{AK2})
  Assume (A1)-(A5) and for $n$ let $u_{0n}$ and $w_{0n}$ be the functions defined in Lemma \ref{lem1}. Then, there exists $T_n \in (0,T]$ such that P$(u_{0n}, w_{0n})$  has a strong solution $(u_n, w_n, e_n)$ on $[0,T_n]$. Namely, $u_n \in W^{1,2}(0,T_n;L^2(0, 1))$\\ $\cap L^{\infty} (0, T_n; W^{1,2}(0, 1))$, $u_{nxx}  \in L^2(Q_l(T_n, e_n))$, $L^2(Q_a(T_n, e_n))$, $u_{nx}(\cdot, e_n(\cdot)\pm) \in L^{\infty}(0,T_n)$, $u_n(\cdot, 1) \in W^{1,2}(0,T_n)$, $u_n \geq 0$ on $Q_a(T_n, e_n)$, $u_n \leq 0$ on $Q_l(T_n, e_n)$,  $w_n \in W^{1,2}(0,T_n;L^2(0, 1))$\\ $\cap L^{\infty} (0, T_n; W^{1,2}(0, 1))$, $w_{nxx}  \in L^2(Q_l(T_n, e_n))$, $L^2(Q_a(T_n, e_n))$, $w_n(t, e_n(t)) \geq \delta_n \mbox{ for any } t$\\$ \in [0,T_n]$, where $\delta_n$ is a positive constant, $e_n \in W^{1, \infty}(0, T_n)$, $0 < e_n < 1  \mbox{ on }  [0, T_n]$ and \eqref{EQl}-\eqref{IC} hold.
   \end{lemma}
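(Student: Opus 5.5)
The plan is to obtain Lemma \ref{lem-strong} by checking that the approximate data $(u_{0n}, w_{0n})$ of Lemma \ref{lem1} satisfy the hypotheses of \cite[Theorem 2.2]{AK2}, namely (A1), (A2), (A3), (A4)', (A5)', (A6)'. Then we read off the listed regularity and sign properties from that theorem and from \cite[Lemma 3.2]{AK2}.

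\textbf{Step 1: hypotheses.} (A1) and (A2) are assumed directly. For (A3), Lemma \ref{lem1} gives $u_{0n} \in W^{1,2}(0,1)$ with $u_{0n} \geq 0$ on $[e_0,1]$ and $u_{0n} \le 0$ on $[0,e_0]$. For (A4)', we have $w_{0n} \in W^{1,2}(0,1)$ and $w_{0n} \geq \delta_1 > 0$, so $w_{0n}(e_0) > 0$. For (A5)', a positive constant $u_b$ lies in $W^{1,2}(0,T)$. Moreover $u_b \geq \theta_c$, because $u_b \geq u_{0n}(e_0) + \theta_c = \theta_c$ by continuity and $u_{0n}(e_0) = 0$.

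The main obstacle is (A6)'. Lemma \ref{lem1} gives only $u_{0n} \in W^{1,\infty}(e_0,1) \cap W^{2,2}(x_0,1)$, not $W^{2,2}(e_0,1)$. I would handle this in one of two ways.
\begin{itemize}
\item Observe that the proof in \cite{AK2} uses the $W^{2,2}$-regularity only near $x=1$. There it serves to obtain $u_n(\cdot,1) \in W^{1,2}(0,T_n)$, via a localized energy estimate for $u_{nt}$ using a cutoff supported in $(x_0,1]$. Away from $x=1$, only $W^{1,2}$ with the sign conditions is needed. This is exactly why \cite[Theorem 2.2]{AK2}, rather than Theorem 2.1, is cited.
\item Alternatively, run the localized argument directly. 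Differentiate the equation in time on the strip $x \in (x_0',1)$, with $x_0 < x_0'$ and $e_n(t) < x_0'$ for small $t$. Use the compatibility condition $-k_a u_{0nx}(1) = g(u_{0n}(1))$ from Lemma \ref{lem1}, and bound $\int |u_{nt}|^2$ near $x=1$ by Gronwall. The monotonicity of $g$ on the relevant range controls the boundary term.
\end{itemize}

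\textbf{Step 2: conclusions.} With the hypotheses verified, the cited result yields $T_n$ and a strong solution $(u_n,w_n,e_n)$ with $e_n \in W^{1,\infty}$, $0<e_n<1$, and the stated Sobolev regularity. The sign conditions $u_n \ge 0$ on $Q_a$ and $u_n \le 0$ on $Q_l$ follow from the maximum principle, using the boundary data $u_n(t,e_n(t)) = 0$ and the sign of $g$. Indeed, $g(u_n(t,1)) \le 0$ whenever $u_n(t,1)+\theta_c \le u_b$, which is preserved by comparison with $u_b - \theta_c$. The bound $u_{nx}(\cdot,e_n\pm) \in L^\infty$ comes from the $W^{1,\infty}$ estimate on $e_n$ together with the Stefan condition \eqref{FBP1}. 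Finally, $w_n(t,e_n(t)) \geq \delta_n > 0$ on $[0,T_n]$ follows by shrinking $T_n$. Since $w_n \in C([0,T_n];C([0,1]))$ by the $W^{1,2}$-regularity and $w_{0n}(e_0)\ge\delta_1$, continuity gives $w_n(t,e_n(t)) \ge \delta_1/2 =: \delta_n$ for small $t$. This bound is precisely what \cite[Lemma 3.2]{AK2} records.
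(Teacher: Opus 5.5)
Your overall route is the paper's. You apply the strong-solvability result of \cite{AK2} to $(u_{0n},w_{0n})$, using the observation in your first bullet that the $W^{2,2}$-regularity and compatibility in (A6)' are only needed near $x=1$. That is exactly the Remark following the lemma, so $u_{0n}\in W^{2,2}(x_0,1)$ suffices. Your checks of (A1)--(A3), (A4)' and (A5)', including $u_b\ge\theta_c$ via $u_{0n}(e_0)=0$, are fine. So is the continuity argument giving $w_n(t,e_n(t))\ge\delta_n$ after shrinking $T_n$. Two points need correcting.

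First, (A6)' also contains the condition $u_{0x}(0)=0$, which you do not address. It genuinely fails for the data of Lemma \ref{lem1}: for large $n$, $u_{0n}$ coincides with $u_0$ on $[0,x_0]$, and $u_0$ is only in $W^{1,2}(0,1)$. The paper's Remark drops this condition by the same inspection of the proof in \cite{AK2}. Note also that the paper attributes both relaxations to the proof of \cite[Theorem 2.1]{AK2}. Your explanation of why Theorem 2.2 is cited is therefore a guess. So is your reading of \cite[Lemma 3.2]{AK2}, which the paper invokes in Lemma \ref{lem-c} for the sign of $u_n(\cdot,1)$, not for the lower bound on $w_n(\cdot,e_n)$.

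Second, your derivation of $u_{nx}(\cdot,e_n\pm)\in L^\infty(0,T_n)$ from $e_n\in W^{1,\infty}$ and \eqref{FBP1} fails in the two-phase setting. Since $u_n\le 0$ to the left of $e_n(t)$, $u_n\ge 0$ to the right, and $u_n(t,e_n(t))=0$, both one-sided derivatives are nonnegative. \eqref{FBP1} controls only their difference $k_l u_{nx}(\cdot,e_n-)-k_a u_{nx}(\cdot,e_n+)=l w_n(\cdot,e_n)e_n'$, and a bound on this difference bounds neither term. The valid implication goes the other way: bounded one-sided derivatives together with $w_n(\cdot,e_n)\ge\delta_n$ give $e_n\in W^{1,\infty}$. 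This property therefore has to be read off from the cited theorem, not derived as you do.
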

   \begin{remark}
  In \cite[Theorem 2.1]{AK2}, the conditions $u_{0x}(0) = 0$ and $u_0 \in W^{2,2}(e_0, 1)$ are required for the strong solvability. However, from the proof of \cite[Theorem 2.1]{AK2}, we observe that $u_{0x}(0) = 0$ is not necessary and $u_0 \in W^{2,2}(d_0, 1)$ is sufficient for the existence of a strong solution, where $d_0$ is a some positive constant with $e_0 < d_0 <1$, since we need the only high regularity for $u$ near $x=1$. Hence, the conditions obtained in \ref{lem1} guarantee the existence of the  strong solutions for each $n$.
      \end{remark}
In our proof of Theorem \ref{th1}, the following proposition is very important.
\begin{proposition}\label{prop1}
Under the same assumption as in Lemma \ref{lem-strong}, there exist $T_0 \in (0,T]$ and $\delta > 0$ such that $T_n \geq T_0$ and $\delta \leq e_n \leq  1-\delta$ on $[0,T_0]$ for $n \geq 1$, where $T_n$ is a positive number obtained by Lemma \ref{lem-strong}.
  \end{proposition}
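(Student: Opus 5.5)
We will derive a priori estimates for the strong solutions $(u_n,w_n,e_n)$ of Lemma \ref{lem-strong} that depend only on $\|u_{0n}\|_{W^{1,2}(0,1)}$, $\|w_{0n}\|_{L^2(0,1)}$, $\delta_1$, $e_0$ and the constants, and not on $n$. We then combine them with a continuation argument. By Lemma \ref{lem1}, $\|u_{0n}\|_{W^{1,2}}$ and $\|w_{0n}\|_{L^2}$ are bounded uniformly in $n$. For simplicity we assume the strong solution can be continued as long as $e_n$ stays inside $(0,1)$ and $w_n(t,e_n(t))$ stays positive; this is the mechanism behind the local existence in \cite{AK2}. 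So it suffices to control $e_n$, $e_n'$ and $w_n(t,e_n(t))$ uniformly on a common interval. We define
\[
\tau_n=\sup\{t\le T_n:\ \delta\le e_n\le 1-\delta \text{ on }[0,t]\},
\]
with $\delta=\min(e_0,1-e_0)/2$. We aim to show $\tau_n\ge T_0$ for a $T_0$ independent of $n$.

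\textbf{Step 1: lower bound for $w_n$.} We apply the comparison principle to the $w_n$-problem. Since $u_n\le u_b-\theta_c$ (by comparison for $u_n$, using $u_{0n}+\theta_c\le u_b$ and the sign of $g$ at $r=u_b-\theta_c$), the monotonicity of $p$ and $b_1\le b_2$ give
\[
-d_a w_{nx}(t,1)=b_1p(u_n+\theta_c)-b_2p(u_b)\le 0 .
\]
Hence the boundary flux does not remove water, and the constant $\delta_1$ is a subsolution. The transmission conditions \eqref{FBP2}--\eqref{FBP3} are compatible with a constant subsolution. We conclude $w_n\ge\delta_1$ on $Q(T_n)$, in particular $w_n(t,e_n(t))\ge\delta_1$. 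This replaces the $n$-dependent constant $\delta_n$.

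\textbf{Step 2: energy estimates for $u_n$.} We test \eqref{EQl} with $u_{nt}$ on each phase and use $u_n(t,e_n(t))=0$. The radiation term contributes $\frac{d}{dt}G(u_n(t,1))$ with $G'=g$, and $G$ is bounded below. Together with the standard identity for the interface terms, this gives
\[
\int_0^t\!\!\int_0^1 |u_{nt}|^2 + \sup_{[0,t]}\|u_{nx}\|_{L^2}^2 \le C + C\int_0^t |e_n'|\,\big(|u_{nx}(s,e_n-)|^2+|u_{nx}(s,e_n+)|^2\big)\,ds .
\]
By \eqref{FBP1} and Step 1, $|e_n'|\le \delta_1^{-1}(k_l|u_{nx}(e_n-)|+k_a|u_{nx}(e_n+)|)$. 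We bound the boundary traces of $u_{nx}$ by the one-dimensional Gagliardo--Nirenberg inequality on intervals of length at least $\delta$:
\[
|u_{nx}(e_n\pm)|^2\le C\|u_{nx}\|_{L^2}\|u_{nxx}\|_{L^2}+C\|u_{nx}\|^2_{L^2}.
\]
Using $u_{nxx}=(c/k)u_{nt}$ and Young's inequality, we absorb the $u_{nxx}$-terms into the left side. This yields an integral inequality of the form $Y(t)\le C+C\int_0^t Y^{q}\,ds$ for some $q>1$, which holds for $t\le\tau_n$.

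\textbf{Step 3: closing the argument.} The ODE comparison for this inequality gives a time $T^*$ and a bound $C^*$, independent of $n$, on $[0,\min(\tau_n,T^*)]$. The resulting bound on $\int|e_n'|^4$ (from the trace estimate) gives, by H\"older,
\[
|e_n(t)-e_0|\le C t^{3/4}.
\]
We choose $T_0\le T^*$ with $C T_0^{3/4}<\delta$. Then $e_n$ cannot reach $\delta$ or $1-\delta$ before $T_0$, so $\tau_n\ge T_0$. The same uniform bounds, together with Step 1, allow the continuation of the strong solution up to $T_0$, so $T_n\ge T_0$.

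The main obstacle is Step 2. The superlinear interface term from the generalized Stefan condition must be absorbed using only $W^{1,2}$ data and a non-regular $w$. The trace and Gagliardo--Nirenberg estimate, which needs the phase lengths bounded below (hence the use of $\tau_n$), must be balanced carefully so that the nonlinearity yields a local-in-time, $n$-independent bound.
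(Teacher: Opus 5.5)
Your Steps 1--3 do give $n$-independent control of $e_n$, but only on $[0,\min\{T_0,T_n\}]$. The assertion $T_n\ge T_0$, which is half of the proposition, rests on the continuation criterion you assume ``for simplicity''. That is a genuine gap.

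Lemma \ref{lem-strong} can only be restarted from data of the type produced by Lemma \ref{lem1}. Such data require:
\begin{itemize}
\item $u\in W^{1,2}(0,1)\cap W^{2,2}$ on a neighbourhood $(1-\delta_0,1)$ of $x=1$, with the compatibility condition $-k_au_x(1)=g(u(1))$. This regularity is what makes $u_t(\cdot,1)$ integrable, which the strong $w$-problem with boundary datum $p(u(\cdot,1)+\theta_c)$ needs.
\item the sign conditions of $u$ relative to $e$, and $u+\theta_c\le u_b$;
\item $w\in W^{1,2}(0,1)$ with a positive lower bound.
\end{itemize}
So you must show that if the maximal existence time $\tilde T_n$ were below $T_0$, then $(u_n(t),w_n(t),e_n(t))$ converges as $t\uparrow\tilde T_n$ to an admissible triple $(u_*,w_*,e_*)$. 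Equivalently, the norms governing the local existence time must stay bounded. Keeping $e_n$ inside $(0,1)$ and $w_n(t,e_n)>0$ is not enough.

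Your uniform bounds give only that $u_n(t)$ is bounded in $W^{1,2}(0,1)$ and Cauchy in $L^2$. They give neither the $W^{2,2}(1-\delta_0,1)$ bound nor the passage to the limit in the boundary condition at $x=1$, and nothing on $w_n(t)$ in $W^{1,2}(0,1)$.

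The paper supplies exactly these, with constants allowed to depend on $n$:
\begin{itemize}
\item $\sup_t|u_{nt}(t)|_{L^2(1-\delta_0,1)}\le C_n$ (via \cite[Lemma 4.4]{AK2}, using $u_{0n}\in W^{2,2}(x_0,1)$). Hence $u_n(t)$ is bounded in $W^{2,2}(1-\delta_0,1)$, $u_{nx}(t)\to u_{*x}$ uniformly near $x=1$, and $-k_au_{*x}(1)=g(u_*(1))$.
\item bounds on $|w_{nx}(t)|_{L^2}$ and $\int\!\!\int|w_{nt}|^2$ (via \cite[Lemmas 4.3, 4.6]{AK} and \cite[Lemma 5.2]{AK2}), so $w_n(t)\to w_*$ in $C([0,1])$.
\item preservation of the sign conditions and of $w_*\ge\delta_1$.
\end{itemize}
Only then is Lemma \ref{lem-strong} re-applied. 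The missing idea is that the extension step needs these higher-order, possibly $n$-dependent estimates on top of the uniform ones. Uniformity is required only for $T_0$ and $\delta$.

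For the uniform bound itself, your route is valid but differs from the paper's. You treat the interface terms from testing with $u_{nt}$ as a perturbation and bound them by $|u_{nx}(e_n\pm)|^3$. You then close a superlinear inequality via Gagliardo--Nirenberg and Young; $q=3$ works. This only gives a local bound and forces the bootstrap through $\tau_n$, because the trace constants need phase lengths $\ge\delta$.

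The paper instead observes that these terms have a sign. Since $u_n(t,e_n)=0$, $u_n\le 0$ in the crumb and $u_n\ge0$ in the crust, we have $u_{nx}(t,e_n\pm)\ge0$. Then \eqref{FBP1} gives
\[
\frac{k_l^2}{2}u_{nx}(t,e_n-)^2e_n'-\frac{k_a^2}{2}u_{nx}(t,e_n+)^2e_n'
=\frac{l}{2}\,w_n(t,e_n)\,|e_n'|^2\bigl(k_lu_{nx}(t,e_n-)+k_au_{nx}(t,e_n+)\bigr)
\ge\frac{l^2}{2}\,w_n(t,e_n)^2\,|e_n'|^3\ge 0 .
\]
So the term is coercive. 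Together with $w_n\ge\delta_1$ and the lower bound on the boundary potential $G_n$, it yields \eqref{en3} on all of $[0,T_n]$ with $C_*$ independent of $n$. Hence $|e_n(t)-e_0|\le C_*^{1/3}t^{2/3}$ directly, with no restriction on the phase lengths. In the paper, Gagliardo--Nirenberg enters only later, to upgrade $e_n'$ from $L^3$ to $L^4$ (Lemma \ref{esen}).
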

  In order to prove Proposition \ref{prop1}, we show the following Lemmas \ref{lem-a}, \ref{lem-b} and \ref{lem-c}. 
\begin{lemma}\label{lem-a}(cf. \cite[Lemma 7.1]{AK2} ) For any $n$ let $(u_n, w_n, e_n)$ be a solution of P($u_{0n}, w_{0n}$) on $[0, T_n]$. Assume (A1)-(A5). Then it holds that $u_n + \theta_c \leq u_b$ in $\overline{Q(T_n)}$.
\end{lemma}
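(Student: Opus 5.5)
We will prove the bound by a comparison (maximum principle) argument applied to the truncation $(u_n + \theta_c - u_b)^+$, which is legitimate because $u_b$ is a positive constant by (A5). Set $U = u_b - \theta_c$, a constant. In the crumb region $Q_l(T_n,e_n)$ we already know $u_n \le 0$ from Lemma \ref{lem-strong}. Also, $u_b \ge u_0 + \theta_c \ge \theta_c$ on $[e_0,1]$ because $u_0 \ge 0$ there. Hence $U \ge 0$ and $u_n \le 0 \le U$ on $Q_l(T_n,e_n)$. It therefore remains to treat the crust region $Q_a(T_n,e_n)$, where $u_n \ge 0$ satisfies $c_a u_{nt} = k_a u_{nxx}$. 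The boundary data there are $u_n(t,e_n(t)) = 0 \le U$ on the free boundary and the Robin-type condition \eqref{BCau} at $x=1$.

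The plan is to test the equation in $Q_a$ with $\phi = (u_n - U)^+$. This function vanishes at $x = e_n(t)$ since $u_n(t,e_n(t)) = 0 \le U$. Integrating over $(e_n(t),1)$ and integrating by parts in $x$ gives
\begin{align*}
\frac{c_a}{2}\frac{d}{dt}\int_{e_n(t)}^1 |(u_n-U)^+|^2dx + k_a\int_{e_n(t)}^1 |((u_n-U)^+)_x|^2 dx = -g(u_n(t,1))\,(u_n(t,1)-U)^+ .
\end{align*}
The boundary term coming from the moving endpoint $e_n(t)$ in the time derivative vanishes, because $(u_n-U)^+ = 0$ at $x=e_n(t)$. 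The regularity listed in Lemma \ref{lem-strong} ($u_n \in W^{1,2}(0,T_n;L^2)$, $u_{nxx}\in L^2(Q_a)$, $e_n \in W^{1,\infty}$) justifies this computation, for instance after transforming to the fixed domain via $y = (x-e_n(t))/(1-e_n(t))$ or by using the Reynolds transport formula.

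The key sign observation concerns the right-hand side. Whenever $u_n(t,1) > U$, we have $u_n(t,1)+\theta_c > u_b > 0$, and $r\mapsto h r + \sigma r^4$ is increasing on $[0,\infty)$. Therefore $g(u_n(t,1)) > g(U) = 0$, so the right-hand side is $\le 0$. Dropping the gradient term and integrating in time then gives $\int_{e_n(t)}^1 |(u_n-U)^+|^2dx \le \int_{e_0}^1 |(u_{0n}-U)^+|^2dx$. The last integral is $0$ by Lemma \ref{lem1}, since $u_{0n}+\theta_c \le u_b$ (for $n$ large, which is the index range used). Consequently $u_n \le U$ a.e. in $Q_a$, and by continuity of $u_n$ the bound extends to $\overline{Q(T_n)}$.

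The main obstacle is the rigorous handling of the moving domain in the energy identity. I expect this to be routine given the strong-solution regularity, with the vanishing of $(u_n-U)^+$ at $x=e_n(t)$ killing the $e_n'$ term. A second point to check is the sign of $g$: the monotonicity argument requires $u_n(t,1)+\theta_c \ge 0$ whenever $u_n(t,1)>U$, and this holds because $u_b>0$.
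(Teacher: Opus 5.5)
Your proposal is correct and follows essentially the paper's argument. You test with $[u_n+\theta_c-u_b]^+$, which vanishes at $x=e_n(t)$ because $u_b\ge\theta_c$; you use that the flux term $-g(u_n(t,1))[u_n(t,1)+\theta_c-u_b]^+$ at $x=1$ is nonpositive, with the correct sign; and you integrate in time from the zero initial value supplied by Lemma \ref{lem1}. The only difference is that you dispose of the crumb region using $u_n\le 0$ from Lemma \ref{lem-strong} instead of testing there as well. Since $(u_n-U)^+\equiv 0$ on the crumb side, your moving-domain integral equals $\int_0^1|(u_n-U)^+|^2dx$, so your ``main obstacle'' disappears exactly as in the paper's fixed-domain computation.
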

\begin{proof}
By (A2) and the assumption $u_b \geq u_{0n} + \theta_c$ on $[0,1]$ it holds that $u_{0n}(e_0) = 0$ and $u_b \geq \theta_c$. Accordingly, for any $t \in [0, T]$, by multiplying $[u_n(t) + \theta_c -u_b]^+$ on both sides of  \eqref{EQl} and
integrating it with $x$ on $[0, e_n(t)]$, $[e_n(t), 1]$, respectively, we obtain
\begin{align*}
\int^{e_n}_0 c_l u_{nt}  [u_n + \theta_c - u_b]^+ dx &= \int^{e_n}_0 k_l u_{nxx}  [u_n + \theta_c - u_b]^+ dx\\
&=- \int^{e_n}_0 k_l u_{nx}  ([u_n + \theta_c - u_b]^+)_x dx\\
&=- \int^{e_n}_0 k_l   \left|([u_n + \theta_c - u_b]^+)_x\right|^2 dx\\
&\leq 0 \text{ a.e. on } [0,T],
\end{align*}
and from the definition  of  the function $g$ it follows 
\begin{align*}
&\int^1_{e_n} c_a u_{nt}  [u_n + \theta_c - u_b]^+ dx\\
 &= \int^1_{e_n} k_a u_{nxx}  [u_n + \theta_c - u_b]^+ dx\\
&=g( u_n(\cdot,1))  [u_n(\cdot,1) + \theta_c - u_b]^+ - \int^1_{e_n} k_a u_{nx}  ([u_n + \theta_c - u_b]^+)_x dx\\
&\leq - \int^1_{e_n} k_a   \left|([u_n + \theta_c - u_b]^+)_x\right|^2 dx\\
&\leq 0 \text{ a.e. on } [0,T].
\end{align*}
Hence, we have
\begin{align*} 
\int^1_0 u_{nt}  [u_n + \theta - u_b]^+ dx \leq 0 \text{ a.e. on } [0,T].
\end{align*}
Since $u_b$ is a constant, we see that
\begin{align*} 
\frac{1}{2} \frac{d}{dt} \int^1_0  \left| [u_n + \theta_c - u_b]^+\right|^2 dx \leq 0 \text{ a.e. on } [0,T].
\end{align*}
By the assumption $u_b \geq u_{0n} + \theta$ on $[0,1]$, it holds that
\begin{align*} 
 \int^1_0  \left| [u_n(t) + \theta_c - u_b]^+\right|^2 dx &\leq  \int^1_0  \left| [u_{0n} + \theta_c - u_b]^+\right|^2 dx\\
&=0 \text{ on } [0,T].
\end{align*}
Thus, Lemma \ref{lem-a} has been proved.
\end{proof}
\begin{lemma}\label{lem-b}(cf. \cite[Lemma 7.2]{AK2} ) For any $n$ let $(u_n, w_n, e_n)$ be a solution of P($u_{0n}, w_{0n}$) on $[0, T_n]$. Suppose the same assumption as in Lemma \ref{lem-a}, then there exists $\delta_1$ independent of $n$ such that $w_n \geq \delta_1$ on $\overline{Q(T_n)}$.
\end{lemma}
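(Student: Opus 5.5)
We prove the lower bound by the same truncation-energy argument as in Lemma \ref{lem-a}, now applied to $w_n$ with the negative part $[w_n - \delta_1]^-$ ($=\max\{\delta_1 - w_n,0\}$). Here $\delta_1$ is the constant from (A4). By Lemma \ref{lem1} we have $w_{0n} \geq \delta_1$ on $[0,1]$ for every $n$, so $[w_{0n}-\delta_1]^- = 0$. The aim is to show that
\begin{align*}
t \mapsto \int_0^1 \left|[w_n(t)-\delta_1]^-\right|^2 dx
\end{align*}
is nonincreasing on $[0,T_n]$. Since it vanishes at $t=0$, this gives $w_n \geq \delta_1$ a.e. in $Q(T_n)$. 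Because $w_n \in L^\infty(0,T_n;W^{1,2}(0,1))$ is continuous in $x$ (indeed in $(t,x)$ by the strong regularity), the bound extends to $\overline{Q(T_n)}$.

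We multiply \eqref{EQa} by $-[w_n-\delta_1]^-$ and integrate separately over $(0,e_n(t))$ and $(e_n(t),1)$. By Lemma \ref{lem-strong}, $w_n$ is a strong solution, so these integrations by parts are justified for a.e. $t$.
\begin{itemize}
\item The time-derivative terms combine into $\frac12 \frac{d}{dt}\int_0^1 |[w_n-\delta_1]^-|^2dx$.
\item The gradient terms give $-d_l\int_0^{e_n}|([w_n-\delta_1]^-)_x|^2dx - d_a\int_{e_n}^1|([w_n-\delta_1]^-)_x|^2dx \leq 0$.
\item At $x=0$ the boundary term vanishes by \eqref{FBP2}.
\item At $x=e_n(t)$ the two one-sided terms cancel, by the transmission condition in \eqref{FBP2} together with the continuity \eqref{FBP3}, which makes $[w_n-\delta_1]^-$ single-valued at the front.
\end{itemize}
Moving the front produces no extra term: $w_n \in W^{1,2}(0,T_n;L^2(0,1))$ on the whole interval, so the time derivative of the integral over $(0,1)$ is taken directly.

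The remaining term, and the one that needs care, is the boundary term at $x=1$. It equals
\begin{align*}
d_a w_{nx}(t,1)\,\bigl(-[w_n(t,1)-\delta_1]^-\bigr) = \bigl\{b_1p(u_n(t,1)+\theta_c) - b_2p(u_b)\bigr\}[w_n(t,1)-\delta_1]^-,
\end{align*}
using \eqref{BCaw}. Lemma \ref{lem-a} gives $u_n(t,1)+\theta_c \leq u_b$. Since $p$ is nondecreasing and nonnegative by (A1), and $b_1\leq b_2$ by (A5),
\begin{align*}
b_1p(u_n(t,1)+\theta_c) \leq b_1 p(u_b) \leq b_2 p(u_b).
\end{align*}
So the bracket is $\leq 0$, and the boundary term has the favourable sign.

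Collecting the three contributions gives $\frac{d}{dt}\int_0^1|[w_n-\delta_1]^-|^2dx \leq 0$ a.e. on $(0,T_n)$, which is the required monotonicity. The constant $\delta_1$ is independent of $n$ because it comes only from (A4) and Lemma \ref{lem1}.

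The main obstacle is this sign check at $x=1$. It is exactly where Lemma \ref{lem-a} and the hypotheses $b_1\leq b_2$ and $p'\geq 0$ are used; everything else is a routine Stampacchia-type truncation argument.
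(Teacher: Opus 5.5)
Your proposal is correct and follows essentially the same route as the paper. The paper multiplies \eqref{EQa} by $[-w_n+\delta_1]^+$, which is your $[w_n-\delta_1]^-$ up to the sign of the multiplier, and integrates by parts on both subintervals. It then uses \eqref{BCaw}, Lemma \ref{lem-a}, the monotonicity of $p$ and $b_1\le b_2$ for the sign at $x=1$, and concludes from $w_{0n}\ge\delta_1$. You additionally make explicit what the paper leaves implicit: the cancellation at the front via \eqref{FBP2}--\eqref{FBP3}, the need for $p\ge 0$, and the passage from a.e.\ to $\overline{Q(T_n)}$ by continuity.
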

\begin{proof}
By the assumption $w_0 > 0$ on $[0,1]$,  we can take $\delta_1$ such that $w_0(x) \geq \delta_1$ for $0 \leq x \leq 1$.
For any $t \in [0, T]$ by multiplying $[-w_n(t) + \delta_1]^+$ on both sides of  \eqref{EQa} and
integrating it with $x$ on $[0, e_n(t)]$, $[e_n(t), 1]$, respectively, and integration by parts, we have
\begin{align*} 
&\int^1_0 w_{nt}  [-w_n + \delta_1]^+ dx\\
&= -\int^{e_n}_0 d_l w_{nx}  ([-w_n + \delta_1]^+)_x dx - \int^1_{e_n} d_a w_{nx}  ([-w_n + \delta_1]^+)_x dx\\
&\quad \, + d_a w_{nx}(\cdot,1)[-w_n(\cdot,1) + \delta_1]^+\\
&= \int^{e_n}_0 d_l  \left| ([-w_n + \delta_1]^+)_x\right|^2 dx + \int^1_{e_n} d_a \left|  ([-w_n + \delta_1]^+)_x\right|^2 dx\\
&\quad \, - \{b_1p(u_n(\cdot,1) + \theta_c) -  b_2 p(u_b)\}[-w_n(\cdot,1) + \delta_1]^+   \text{ a.e. on } [0,T].
\end{align*}
 Moreover, by the assumption $b_1 \leq b_2$, Lemma \ref{lem-a} and the monotonicity of $p$,  we obtain
  
\begin{align*} 
\frac{1}{2}\frac{d}{dt} \int^1_0 \left| [-w_n + \delta_1]^+\right|^2 dx &\leq  \{b_1p(u_n(\cdot,1) + \theta_c) -  b_2 p(u_b)\}[-w(\cdot,1) + \delta_1]^+\\
&\leq 0 \text{ a.e. on } [0,T].
\end{align*}

Thanks to  $w_0(x) \geq \delta_1$ for $x \in [0,1]$, it follows that
\begin{align*} 
 \int^1_0 \left| [-w_n + \delta_1]^+\right|^2 dx 
 &\leq \int^1_0 \left| [-w_{0n} + \delta_1]^+\right|^2 dx\\
  &\leq 0 \quad \ \text{on } [0,T] \mbox{ for any } n.
\end{align*}
 Thus, we can show Lemma \ref{lem-b}.
\end{proof}
\begin{lemma}\label{lem-c}(cf. \cite[Lemma 7.3]{AK2} ) For any $n$ let $(u_n, w_n, e_n)$ be a solution of P($u_{0n}, w_{0n}$) on $[0, T_n]$. Suppose the same assumption as in Lemma \ref{lem-a}, then there exists a positive constant $C_*$ independent $n$ such that 
\begin{equation}
 \int^t_0 \left|e'_n\right|^3 d\tau + \left|u_n(t)\right|_{W^{1,2}(0,1)}^2 + \int^t_0 \left|u_{nt}\right|^2_{L^2(0,1)}d\tau   \leq C_* \text{ for } 0 \leq t \leq T_n.\label{en3}
\end{equation}
\end{lemma}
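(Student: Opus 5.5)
The plan is the classical energy method for two-phase Stefan problems. We test the heat equation \eqref{EQl} with $u_{nt}$ on each phase separately, and use the Stefan condition \eqref{FBP1} together with the lower bound $w_n\ge\delta_1$ from Lemma \ref{lem-b} to turn the resulting free-boundary terms into a coercive term in $e_n'$.

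\emph{Step 1: the energy identity.} For a.e.\ $t$, multiply $c_lu_{nt}=k_lu_{nxx}$ by $u_{nt}$ on $(0,e_n(t))$ and $c_au_{nt}=k_au_{nxx}$ by $u_{nt}$ on $(e_n(t),1)$, and integrate by parts. Differentiating $u_n(t,e_n(t))=0$ gives $u_{nt}(t,e_n(t)\pm)=-e_n'u_{nx}(t,e_n(t)\pm)$. Together with $\frac{d}{dt}\int_0^{e_n}|u_{nx}|^2dx=2\int_0^{e_n}u_{nx}u_{nxt}dx+e_n'|u_{nx}(e_n-)|^2$ and the analogue on the crust, this yields
\begin{align*}
&c_l\int_0^{e_n}|u_{nt}|^2dx+c_a\int_{e_n}^1|u_{nt}|^2dx+\frac{d}{dt}\Big(\frac{k_l}{2}\int_0^{e_n}|u_{nx}|^2dx+\frac{k_a}{2}\int_{e_n}^1|u_{nx}|^2dx+\frac{d}{dt}\text{-term from }\hat g\Big)\\
&=-\frac{e_n'}{2}\big(k_l|u_{nx}(e_n-)|^2-k_a|u_{nx}(e_n+)|^2\big),
\end{align*}
where $\hat g'=g$. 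The boundary term at $x=1$ becomes $-\frac{d}{dt}\hat g(u_n(t,1))$, which is legitimate because $u_n(\cdot,1)\in W^{1,2}(0,T_n)$ by Lemma \ref{lem-strong}. By Lemma \ref{lem-a} and the sign of $u_n$ in the crust, $u_n(t,1)$ lies in $[0,u_b-\theta_c]$, so $\hat g(u_n(t,1))$ is bounded independently of $n$.

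\emph{Step 2: the jump term.} By the sign conditions, $u_{nx}(e_n-)\ge0$ and $u_{nx}(e_n+)\ge0$, and \eqref{FBP1} shows $e_n'\ge0$ (after checking that the right-hand side is nonnegative, as in \cite{AK2}). Hence $k_lu_{nx}(e_n-)\ge lw_ne_n'\ge l\delta_1e_n'$, and the jump term behaves like
\[
-\tfrac{e_n'}{2}k_l|u_{nx}(e_n-)|^2\le -c\,\delta_1^2|e_n'|^3 ,
\]
which is the source of the $\int|e_n'|^3$ on the left of \eqref{en3}. The remaining piece $+\frac{e_n'}{2}k_a|u_{nx}(e_n+)|^2$ must be absorbed. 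We bound it by the trace estimate
\[
|u_{nx}(e_n+)|^2\le \varepsilon|u_{nxx}|^2_{L^2(e_n,1)}+C_\varepsilon|u_{nx}|^2_{L^2(e_n,1)},
\]
which holds provided $1-e_n$ stays bounded below. We then use $u_{nxx}=(c_a/k_a)u_{nt}$ and Young's inequality $e_n'a\le\varepsilon|e_n'|^3+C_\varepsilon a^{3/2}$. An alternative is to write $\frac12k_a e_n'|u_{nx}(e_n+)|^2$ via $\frac{d}{dt}$ of the Dirichlet energy.

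\emph{Step 3: closing.} Gronwall's lemma then gives \eqref{en3} with $C_*$ depending only on $|u_{0n}|_{W^{1,2}}$, which is uniformly bounded by Lemma \ref{lem1}, and on $\delta_1$, $u_b$, $M_p$, $T$. The $L^2$ part of $|u_n|_{W^{1,2}}$ follows from the $u_{nt}$ bound.

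The main obstacle is the absorption of the crust-side trace term in Step 2, because the trace estimate degenerates as $e_n\to1$. I would first try the bound of Step 2 on the interval where $e_n\le1-\delta$, using $e_n\ge e_0$ from monotonicity. If circularity arises with Proposition \ref{prop1}, I would follow the structure of \cite[Lemma 7.3]{AK2} and take the constants from there, since the estimate is independent of the higher-regularity assumption (A6)'.
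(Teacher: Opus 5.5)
\textbf{There is a genuine gap in Step 2.} You assert that \eqref{FBP1} gives $e_n'\ge 0$, but nothing forces this. The sign conditions only yield $u_{nx}(t,e_n(t)-)\ge 0$ and $u_{nx}(t,e_n(t)+)\ge 0$, so $k_lu_{nx}(e_n-)-k_au_{nx}(e_n+)$ may take either sign. In the hot-oven regime the front typically recedes, i.e.\ $e_n'\le 0$. Wherever $e_n'<0$, your crumb-side term $-\frac{e_n'}{2}k_l|u_{nx}(e_n-)|^2$ has the wrong sign and the coercivity is lost.

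Even where $e_n'\ge 0$, the absorption does not close. After $e_n'a\le\varepsilon|e_n'|^3+C_\varepsilon a^{3/2}$ you must control $|u_{nx}(e_n+)|^3$. Your trace inequality then produces $|u_{nxx}|_{L^2}^3$, which cannot be absorbed by $\int|u_{nt}|^2$. The sharper Gagliardo--Nirenberg form instead leaves a superlinear power of the Dirichlet energy, so Gronwall gives only a short-time bound, not \eqref{en3} on all of $[0,T_n]$.

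Finally, the trace constant needs $1-e_n$ bounded below. That is exactly what Proposition \ref{prop1} extracts from this lemma, so the argument is circular; and $e_n\ge e_0$ would not supply that bound anyway. Deferring to \cite[Lemma 7.3]{AK2} at that point is not a proof.

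\textbf{The missing idea is to weight the test functions by the conductivities.} Multiply the crumb equation by $k_lu_{nt}$ and the crust equation by $k_au_{nt}$. The free-boundary terms then become a difference of squares. Factoring it through \eqref{FBP1}, and using $u_{nx}(e_n\pm)\ge0$ so that $k_lu_{nx}(e_n-)+k_au_{nx}(e_n+)\ge|k_lu_{nx}(e_n-)-k_au_{nx}(e_n+)|=l\,w_n(e_n)|e_n'|$, gives
\begin{align*}
\frac{e_n'}{2}\bigl(k_l^2u_{nx}(e_n-)^2-k_a^2u_{nx}(e_n+)^2\bigr)
&=\frac{l\,w_n(e_n)\,|e_n'|^2}{2}\bigl(k_lu_{nx}(e_n-)+k_au_{nx}(e_n+)\bigr)\\
&\ge\frac{l^2\delta_1^2}{2}|e_n'|^3 .
\end{align*}
This holds regardless of the sign of $e_n'$, with no trace inequality and no dependence on the distance of $e_n$ from $0$ or $1$.

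With your unweighted test function $u_{nt}$, the combination $k_lu_{nx}(e_n-)^2-k_au_{nx}(e_n+)^2$ does not factor through \eqref{FBP1} unless $k_l=k_a$. That is what pushed you into the false sign assumption.

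Your handling of the boundary term at $x=1$, via an antiderivative of $g$ bounded using Lemma \ref{lem-a} and $u_n(\cdot,1)\ge 0$, is fine. With the weighted multipliers every term is sign-controlled, so a direct integration in time (no Gronwall) yields \eqref{en3}. The constant is uniform in $n$ because $|u_{0n}|_{W^{1,2}(0,1)}$ is bounded by Lemma \ref{lem1}.
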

\begin{proof}
 By multiplying $k_l u_t$, $k_a u_t$ on both sides of the first equation of \eqref{EQl}, the second one and integrating it with $x$ on $[0, e_n]$, $[e_n, 1]$, respectively, it holds that
\begin{align*} 
& \delta_* \int^1_0  \left|u_{nt}\right|^2 dx + \frac{k_l^2}{2} \frac{d}{dt} \int^{e_n}_0\left|u_{nx}\right|^2 dx   + \frac{k_a^2}{2} \frac{d}{dt} \int^1_{e_n}\left|u_{nx}\right|^2 dx \\
&\quad \, + \frac{k_l^2}{2} u_{nx}(\cdot, e_n-)^2 e'_n - \frac{k_a^2}{2} u_{nx}(\cdot, e_n+)^2 e'_n\\
&\quad \,  + k_a  (h( u_n(\cdot,1) + \theta_c - u_b) + \sigma( (u_n(\cdot,1) + \theta_c)^4 - u_b^4))u_{nt}(\cdot, 1)\\
&\leq 0 \text{ a.e. on } [0,T_n],
\end{align*}
where $\delta_* = \min \{ c_l k_l, c_a k_a \}$. Since we approximate $u_0$, $u_{nt}(\cdot,1)$ is well defined(see \cite[Proposition 2.7]{AK2}).
Easily, we have
\begin{align*} 
 \frac{k_l^2}{2} u_{nx}(\cdot, e_n-)^2 e'_n - \frac{k_a^2}{2} u_{nx}(\cdot, e_n+)^2 e'_n &= \frac{\left|e'_n\right|^2}{2}(k_l  u_{nx}(\cdot, e_n-) + k_a  u_{nx}(\cdot, e_n+) )l w_n(\cdot, e_n)\notag\\
 &=: I \text{ a.e. on } [0,T_n].
\end{align*}
If $e'_n(t) > 0$ for some $t \in [0,T_n]$, then $k_l  u_{nx}(t, e_n(t)) > k_a  u_{nx}(t, e_n(t))$ implies that \[\left|e'_n(t)\right| = k_l  u_{nx}(t, e_n(t)-) - k_a  u_{nx}(t, e_n(t)+).\] Thus, we obtain
\begin{samepage}
  \begin{align*}
I(t) &\geq  \frac{\left|e'_n(t)\right|^2}{2}(k_l  u_{nx}(t, e_n(t)-) + k_a  u_{nx}(t, e_n(t)+) )l w_n(t, e_n(t))\\
&\geq  \frac{\left|e'_n(t)\right|^2}{2}(k_l  u_{nx}(t, e_n(t)-) - k_a  u_{nx}(t, e_n(t)+) )l w_n(t, e_n(t))\\
&= \frac{\left|e'_n(t)\right|^3}{2}l^2 w_n(t, e_n(t))^2 \text{ a.e. on } [0,T_n].
\end{align*}
\end{samepage}

We note that $u_{nx}(t,e_n(t)) \geq 0$ because of $u_n(t,e_n(t)) = 0$ and $u_n(t,x) \geq 0$ for all $x \in [e_n(t),1]$.
In case $e'_n(t) \leq 0$  for some $t \in [0,T_n]$, we can get the same inequality. Also, we have
\begin{align*}
& (h( u_n(\cdot,1) + \theta_c - u_b) + \sigma( (u_n(\cdot,1) + \theta_c)^4 - u_b^4))u_{nt}(\cdot, 1)\\
&=\frac{d}{dt}G_n + hu'_bu_n(\cdot, 1) + 4\sigma u_b^3 u'_b u_n(\cdot, 1) \text{ a.e. on } [0,T_n],
\end{align*}
where
\begin{align*}
G_n &= \frac{h}{2}(u_n(\cdot,1) + \theta_c)^2 - h u_b u_n(\cdot,1) +  \frac{\sigma}{5}(u_n(\cdot,1) + \theta_c)^5 - \sigma u_b^4 u_n(\cdot,1) \text{ on } [0,T_n].
\end{align*}
Here, we can get $u_n(\cdot,1) \geq 0$ on $[0,T_n]$ in the similar way of Lemma 3.2 in \cite{AK2}. Therefore, by applying Young's inequality, we obtain
\begin{align*}
G_n &=  \frac{h}{2}(u_n(\cdot,1) + \theta_c)^2 - h u_b (u_n(\cdot,1) + \theta_c) + h u_b  \theta_c \\
&\quad \,  +  \frac{\sigma}{5}(u_n(\cdot,1) + \theta_c)^5 - \sigma u_b^4 (u_n(\cdot,1) + \theta_c) +  \sigma u_b^4\theta_c\\
&\geq  \frac{h}{2}(u_n(\cdot,1) + \theta_c)^2 - h u_b (u_n(\cdot,1) + \theta_c) - \sigma u_b^4 (u_n(\cdot,1) + \theta_c)\\
&\geq  \frac{h}{4}(u_n(\cdot,1) + \theta_c)^2 - 2h u_b^2 - \frac{2}{h}\sigma^2 u_b^8\\
&\geq -C_b \text{ a.e. on } [0,T_n],
\end{align*}
where $\displaystyle C_b = 2h u_b^2 - \frac{2}{h}\sigma^2 u_b^8$. 
By putting
\begin{align*} 
E_n = \frac{k_l^2}{2} \int^{e_n}_0\left|u_{nx}\right|^2 dx   + \frac{k_a^2}{2} \int^1_{e_n}\left|u_{nx}\right|^2 dx + k_a(G_n + C_b) \text{ on } [0,T_n],
\end{align*}
we see that
\begin{align*} 
& \delta_* \int^1_0  \left|u_{nt}\right|^2 dx +  \frac{\left|e'_n\right|^3}{2}l^2 w_n(\cdot, e_n)^2+  \frac{d}{dt}E_n \leq 0 \text{ a.e. on } [0,T_n].
\end{align*}
Thus, we obtain
\begin{align*} 
 \delta_* \int^t_0 \int^1_0  \left|u_{nt}\right|^2 dxd\tau +  \frac{l^2}{2}\int^t_0 \left|e'_n\right|^3 w_n(\cdot, e_n)^2 d\tau + E_n(t) \leq E_n(0) \text{ for } t \in [0,T_n].
\end{align*}
Hence,  on account of Lemma \ref{lem-b}, Lemma \ref{lem-c} has been proved.
\end{proof}
  \begin{proof}[Proof of Proposition \ref{prop1}]
  Let $(u_n, w_n, e_n)$ be the solution of P$(u_{0n}, w_{0n})$  on $[0, T_n]$ for $n \geq 1$, and take $\delta_0 > 0$ such that $0 < 2 \delta_0 \leq e_0 < x_0 < 1 - 2 \delta_0 < 1$ and $[0, \tilde{T}_n)$ be the maximal interval of existence of the solution to  P$(u_{0n}, w_{0n})$. Moreover, we choose $T_0 \in (0,T]$ with $C_*^{1/3} T_0^{2/3} < \delta_0$, and put $\hat{T}_n = \min \{ T_0, \tilde{T}_n \}$, where $C_*$ is the positive constant given in Lemma \ref{lem-c}. From now on we establish the existence a solution of P$(u_{0n}, w_{0n})$ on $[0, T_0]$ for any $n$. If $\tilde{T}_n \geq T_0$, then it is true. Otherwise, it is sufficient to show that the solution can be extend beyond $\tilde{T}_n$. First, thanks to Lemma \ref{lem-c}, it holds that
\begin{align} 
|u_n(t) - u_n(t')|_{L^2(0,1)} &\leq \int^t_{t'} |u_{n \tau}|_{L^2(0,1)} d\tau \notag \\
&\leq C_1(t-t')^{1/2} \quad \mbox{for } 0 < t' < t < T_n,\label{Aomo}
\end{align}
where $C_1$ is a positive constant. This yields that $\{ u_n(t) \}_{t \uparrow T_n}$ is a Cauchy sequence in $L^2(0,1)$. By Lemma \ref{lem-c}, we see that
\begin{align}
|e_n(t) - e_n(t')| &\leq \int^t_{t'}|e'_n|d\tau \notag\\
&\leq C_2 (t-t')^{\frac{2}{3}} \quad \mbox{for } 0 < t' < t < \hat{T}_n, \label{cauchy}
\end{align}
and
\begin{align*}
|e_n(t) - e_0| &\leq \int^t_{0}|e'_n|d\tau \\
&\leq C_*^{1/3} T_0^{2/3} \\
&< \delta_0 \quad \mbox{for } 0 <  t < \hat{T}_n,
\end{align*}
where $C_2$ is a positive constant. It is clear that $\delta_0 \leq e_n \leq 1 - \delta_0$ on $[0,\hat{T}_n)$. On account of \eqref{cauchy}, $\{ e_n(t) \}_{t \uparrow \hat{T}_n}$ is also a Cauchy sequence in $\mathbb{R}$. Thus, there exist $e_{*} \in \mathbb{R}$ and $u_{*} \in L^2(0,1)$ such that
\begin{align*} 
e_n(t) \to e_{*} \quad \mbox{in } \mathbb{R} \mbox{ as } t \uparrow \hat{T}_n, \quad \delta_0 \leq e_{*} \leq 1-\delta_0,
\end{align*}
and
\begin{align} 
u_n(t) \to u_{*} \quad \mbox{in } L^2(0,1) \mbox{ as } t \uparrow \hat{T}_n.\label{ri}
\end{align}
Moreover, thanks to Lemmas \ref{lem-c}, we observe that  $\{ u_n(t)| 0 \leq t < \hat{T}_n \}$ is bounded in $W^{1,2}(0,1)$. Since $u_{0n} \in W^{2,2}(x_0,1)$, we can obtain the following estimate in the similar way to \cite[Lemma 4.4]{AK2}: $$|u_{nt}(t)|_{L^2(1-\delta_0,1)} \leq C_n \mbox{ for } 0 \leq t \leq \hat{T}_n,$$ where $C_n$ is a positive constant. Hence, $u_{nxx} = \frac{c_a}{k_a}u_{nt}$ in $(0, \hat{T}_n) \times (1-\delta_0, 1)$ implies that $\{ u_n(t)| 0 \leq t < \hat{T}_n \}$ is also bounded in $W^{2,2}(1-\delta_0,1)$. Immediately, it holds that
\begin{align*} 
u_n(t) \to u_{*} \quad \mbox{in } C([0,1]),  \mbox{ and weakly in } W^{1,2}(0,1) \mbox{ and } W^{2,2}(1-\delta_0,1) \mbox{ as } t \uparrow \hat{T}_n.
\end{align*}
\par Furthermore, by Lemma \ref{lem-strong}, we see that $u_n(t) \geq 0$ on $[e_n(t),1]$ for $t \in [0,\hat{T}_n)$. Hence, it is easy to obtain $u_{*} \geq 0$ on $[e_{*}, 1]$. Similarly, we can prove $u_{*} \leq 0$ on $[0, e_{*}]$.
\par Next, based on \eqref{ri}, \cite[Lemmas 4.3 and 4.6]{AK} and \cite[Lemma 5.2]{AK2} guarantee that
$$\int^1_0 |w_{nx}(t)|^2 dx + \int^t_0 \int^1_0 |w_{n \tau}(\tau)|^2 dx d\tau \leq C_n^{(1)} \mbox{ for } 0 \leq t \leq T_n,$$
where $C_n^{(1)}$ is a positive constant which may depend on $n$. Similarly to \eqref{Aomo},  $\{ w_n(t) \}_{t \uparrow \hat{T}_n}$ is a Cauchy sequence in  $L^2(0,1)$. Accordingly, there exists $w_{*} \in L^2(0,1)$ such that
\begin{align*} 
w_n(t) \to w_{*} \quad \mbox{in } L^2(0,1) \mbox{ as } t \uparrow \hat{T}_n.
\end{align*}
Moreover, the boundedness of  $\{ w_n(t)| 0 \leq t < \hat{T}_n \}$ in $W^{1,2}(0,1)$ implies that
\begin{align*} 
w_n(t) \to w_{*} \quad \mbox{in } C([0,1]) \mbox{ and weakly in } W^{1,2}(0,1) \mbox{ as } t \uparrow \hat{T}_n.
\end{align*}
 Because of $w_n(t) \geq \delta_1$ on $\overline{Q(\hat{T}_n)}$ by Lemma \ref{lem-b}, we have $w_{*} \geq \delta_1$ on $[0,1]$.
\par Finally, we show $-k_a u_{*  x}(1) = g( u_{*}(1))$. By Lemma \ref{lem4}, $\{ u_{nx}(t)| 0 \leq t < \hat{T}_n \}$ is bounded in $W^{1,2}(1-\delta_0,1)$, $\{ u_{nx}(t)| 0 \leq t < \hat{T}_n \}$ is bounded in $W^{1,2}(1-\delta_0,1)$. Hence, 
\begin{align*} 
u_{nx}(t) \to u_{*x} \quad \mbox{in } C([1-\delta_0,1]) \mbox{ as } t \uparrow \hat{T}_n.
\end{align*}
It is also clear that
\begin{align} 
u_n(t) \to u_{*} \quad \mbox{in } C([1-\delta_0,1]) \mbox{ as } t \uparrow \hat{T}_n.\label{con8}
\end{align}
By using  \eqref{con8}, we obtain $-k_a u_{*  x}(1) = g( u_{*}(1))$. Hence, thanks to Lemma \ref{lem-strong}, we see that P$(u_{0n}, w_{0n})$ has a solution on $[0, \tilde{T}_n]$ for some $\tilde{T}_n > \hat{T}_n$. This guarantees that P$(u_{0n}, w_{0n})$ has a solution 
$(u_n, w_n, e_n)$ on $[0,T_0]$. Obviously, we have $\delta \leq e_n \leq 1-\delta$ on $[0,T_0]$ for $n \geq 1$.
   \end{proof}
\section{Proof of Theorem \ref{th1}}\label{proof-prop1}
The aim of this section is to prove Theorem \ref{th1}. Before presenting its proof, we establish some uniform estimates for $e_n$ and $w_n$. Throughout this section, we assume (A1)-(A5) and let $T_0$ be the positive constant obtained in Proposition \ref{prop1}.
\begin{lemma}\label{lem4}(cf.  \cite[Lemma 5.2]{AK2}) 
Let $(u_n, w_n, e_n)$ be a solution of P$(u_{0n}, w_{0n})$ for $n \geq 1$. Then there exists a positive constant $C_3 = C_3(\delta, M, T_0, |u_b|_{W^{1,2}(0,T)}, |u_{0n}|_{W^{1,2}(0,1)}, |w_{0n}|_{L^2}$\\$_{(0,1)})$  such that
\begin{align}
 \int^1_0 |w_n(t,x)|^2dx +\int^{t}_{0} \!\!\! \int^{1}_{0} |w_{nx}(\tau,x)|^2 dx d\tau \leq C_3 \quad \mbox{for } 0 \leq t \leq T_0.\label{wwx}
\end{align}
\end{lemma}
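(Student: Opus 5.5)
We prove \eqref{wwx} by the standard $L^2$ energy estimate for $w_n$, using only that $(u_n, w_n, e_n)$ is a strong solution on $[0,T_0]$ (Lemma \ref{lem-strong} and Proposition \ref{prop1}). This regularity justifies multiplying \eqref{EQa} by $w_n$ and integrating by parts on $[0,e_n(t)]$ and $[e_n(t),1]$ separately. We fix $t\in[0,T_0]$, multiply the first equation of \eqref{EQa} by $w_n$ and integrate over $(0,e_n(t))$, and do the same with the second equation over $(e_n(t),1)$.

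Since $e_n$ moves, the time derivative is handled by
\[
\frac{d}{dt}\frac12\int_0^{e_n}|w_n|^2dx=\int_0^{e_n}w_nw_{nt}dx+\frac12 w_n(t,e_n(t)-)^2e_n'(t),
\]
and similarly on $(e_n,1)$ with the opposite sign. By \eqref{FBP3} the two boundary terms at $x=e_n(t)$ cancel, so the left side is just $\frac12\frac{d}{dt}|w_n(t)|^2_{L^2(0,1)}$. In the spatial integrations by parts:
\begin{itemize}
\item the flux terms at $x=e_n(t)$ cancel by the transmission condition \eqref{FBP2} together with \eqref{FBP3};
\item the term at $x=0$ vanishes by \eqref{FBP2};
\item the term at $x=1$ becomes $-\{b_1p(u_n(t,1)+\theta_c)-b_2p(u_b)\}w_n(t,1)$ by \eqref{BCaw}.
\end{itemize}
Hence
\[
\frac12\frac{d}{dt}\int_0^1|w_n|^2dx+\min\{d_l,d_a\}\int_0^1|w_{nx}|^2dx\le \bigl|b_1p(u_n(t,1)+\theta_c)-b_2p(u_b)\bigr|\,|w_n(t,1)|.
\]

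The main point is to control the boundary term uniformly in $n$. By (A1), $0\le p\le M_p$, so the coefficient is bounded by $(b_1+b_2)M_p$, independently of $u_n$. In fact, since $b_1\le b_2$, Lemma \ref{lem-a} and the monotonicity of $p$ make the coefficient nonpositive; combined with $w_n\ge\delta_1>0$ from Lemma \ref{lem-b}, the right side is a nonnegative but bounded contribution. We treat $|w_n(t,1)|$ with the one-dimensional trace/Gagliardo--Nirenberg inequality $|w_n(t,1)|\le C|w_n|_{L^2}^{1/2}|w_n|_{W^{1,2}}^{1/2}$ followed by Young's inequality. This gives
\[
C|w_n(t,1)|\le \frac{\min\{d_l,d_a\}}{2}|w_{nx}(t)|^2_{L^2}+C'\bigl(1+|w_n(t)|^2_{L^2}\bigr).
\]

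Absorbing the gradient term and applying Gronwall's inequality on $[0,T_0]$, we obtain
\[
|w_n(t)|_{L^2}^2+\int_0^t|w_{nx}|_{L^2}^2d\tau\le C\bigl(|w_{0n}|^2_{L^2}+T_0\bigr)e^{CT_0}.
\]
Since $w_{0n}\to w_0$ in $L^2(0,1)$ by Lemma \ref{lem1}, the right side is bounded uniformly in $n$, which yields \eqref{wwx}. The only delicate point is the rigorous justification of the moving-domain differentiation. This is covered by the strong regularity $w_n\in W^{1,2}(0,T_0;L^2)\cap L^\infty(0,T_0;W^{1,2})$ and $e_n\in W^{1,\infty}$ from Lemma \ref{lem-strong}, which make $t\mapsto\int_0^{e_n}|w_n|^2dx$ absolutely continuous.
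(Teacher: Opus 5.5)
Your proposal is correct and follows essentially the same route as the paper. You test \eqref{EQa} with $w_n$, cancel the interface terms via \eqref{FBP2}--\eqref{FBP3}, bound the flux at $x=1$ using $0\le p\le M_p$, control $|w_n(t,1)|$ by a trace inequality plus Young, and close with Gronwall and $w_{0n}\to w_0$ in $L^2(0,1)$.

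The only harmless difference is in the trace step. The paper applies Young first and then a trace inequality on $(e_n(t),1)$, which uses $1-e_n\ge\delta$ and so makes $C_3$ depend on $\delta$; your Gagliardo--Nirenberg bound on all of $(0,1)$ avoids that dependence. Your moving-domain differentiation is also unnecessary, since $w_n\in W^{1,2}(0,T_0;L^2(0,1))$ already gives $\frac{d}{dt}\frac12|w_n|^2_{L^2(0,1)}=\int_0^1 w_{nt}w_n\,dx$.
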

\begin{proof}
For any $t \in [0,T]$, by multiplying $w_n$ on both sides of \eqref{EQa} and integrating it with $x$ on $[0, e(t)]$ and $[e(t),1]$, we have
\begin{align*}
&\frac{1}{2} \frac{d}{dt} \int^1_0 |w_n|^2dx + d_l \int^{e}_0 |w_{nx}|^2 dx  + d_a \int_{e}^1 |w_{nx}|^2 dx\\
&= d_a w_{nx}(\cdot,1) w_n(\cdot,1)\\
&= - (b_1 p(u_n(\cdot,1) + \theta_c) - b_2 p(u_b))w_n(\cdot,1)\\
 &\leq \frac{2}{d_a} |b_1 M_p + b_2 M_p|^2 + \frac{d_a}{8}|w_n(\cdot,1)|^2\\
    &\leq \frac{2}{d_a} |b_1 M_p + b_2 M_p|^2 + \frac{d_a}{8}\left(\frac{2}{\delta}+1\right)\int^1_e |w_n|^2 dx + \frac{d_a}{8} \int^1_e |w_{nx}|^2 dx \quad \mbox{a.e. on } [0,T].
    \end{align*}
 Here, by putting $d_* = \min \{d_l, d_a \}$ we obtain
 \begin{align*}
&\frac{1}{2} \frac{d}{dt} \int^1_0 |w_n|^2dx +\frac{7 d_*}{8} \int^1_0 |w_{nx}|^2 dx\\
 &\leq \frac{2}{d_a} |b_1 M_p + b_2 M_p|^2 + \frac{d_a}{8}\left(\frac{2}{\delta}+1\right)\int^1_e |w_n|^2 dx  \quad \mbox{a.e. on } [0,T].
  \end{align*}
  By applying Gronwall's inequality and Lemma \ref{lem1}, we conclude that Lemma \ref{lem4} holds.
     \end{proof}
\begin{lemma}\label{lem-fu}
Let $(u_n, w_n, e_n)$ be a solution of P$(u_{0n}, w_{0n})$ for $n \geq 1$. Then there exists a positive constant $C_4$ such that
\begin{align*}
\left| \int^{T_0}_0 \int^1_0 w_{nt} \eta dx dt \right| \leq C_4|\eta|_{L^2(0,T;W^{1,2}(0,1))} \quad \mbox{for } \eta \in L^2(0,T;W^{1,2}(0,1) \mbox{ and } n \geq 1.
\end{align*}
Namely, $\{ w_{nt} \}$ is bounded in $L^2(0,T;W^{1,2}(0,1)^*)$, where $W^{1,2}(0,1)^*$ is a dual space of $W^{1,2}(0,1)$.
\end{lemma}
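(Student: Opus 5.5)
Since each $w_n$ is a strong solution on $[0,T_0]$ (Lemma \ref{lem-strong} together with Proposition \ref{prop1}), I will derive the dual bound directly from the equation in divergence form. Fix $\eta \in L^2(0,T_0;W^{1,2}(0,1))$. For a.e. $t$, multiply \eqref{EQa} by $\eta(t)$ and integrate over $(0,e_n(t))$ and $(e_n(t),1)$ separately.

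Integrating by parts, the interface terms at $x=e_n(t)$ cancel. This uses the transmission condition \eqref{FBP2}, $d_l w_{nx}(t,e_n-) = d_a w_{nx}(t,e_n+)$, together with the continuity of $\eta(t)\in W^{1,2}(0,1)$. The boundary term at $x=0$ vanishes by \eqref{FBP2}, and the term at $x=1$ is rewritten with \eqref{BCaw}. This yields
\begin{align*}
\int^1_0 w_{nt}\eta\, dx = - d_l\int^{e_n}_0 w_{nx}\eta_x dx - d_a\int^1_{e_n} w_{nx}\eta_x dx - \{b_1p(u_n(\cdot,1)+\theta_c) - b_2p(u_b)\}\eta(\cdot,1)
\end{align*}
a.e. on $[0,T_0]$. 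Integrating in time over $(0,T_0)$ then gives the quantity to be bounded.

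The first two terms are controlled by Cauchy--Schwarz:
\begin{align*}
\max\{d_l,d_a\}\,|w_{nx}|_{L^2(Q(T_0))}|\eta_x|_{L^2(Q(T_0))} \leq \max\{d_l,d_a\}\, C_3^{1/2}|\eta|_{L^2(0,T_0;W^{1,2}(0,1))}.
\end{align*}
Here Lemma \ref{lem4} supplies the bound $C_3$, and it is uniform in $n$ because $|u_{0n}|_{W^{1,2}}$ and $|w_{0n}|_{L^2}$ are bounded by Lemma \ref{lem1}.

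For the boundary term, (A1) gives $0\le p\le M_p$, so the bracket is bounded by $(b_1+b_2)M_p$. The trace estimate $|\eta(t,1)|\le C|\eta(t)|_{W^{1,2}(0,1)}$ in one dimension then gives
\begin{align*}
\int_0^{T_0}|\eta(t,1)|\,dt \le C T_0^{1/2}|\eta|_{L^2(0,T_0;W^{1,2}(0,1))}.
\end{align*}
Collecting the three contributions yields $C_4$ independent of $n$. Identifying $\int_0^1 w_{nt}\eta\,dx$ with the duality pairing gives the stated boundedness of $\{w_{nt}\}$ in $L^2(0,T_0;W^{1,2}(0,1)^*)$.

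The only delicate point is justifying the pointwise-in-time integration by parts with the moving interface. This is legitimate because $w_{nxx}\in L^2$ on each of $Q_l$ and $Q_a$, and the one-sided traces $w_{nx}(t,e_n(t)\pm)$ exist for a.e. $t$. I expect no genuine obstacle beyond checking this, and checking that $C_3$ does not depend on $n$.
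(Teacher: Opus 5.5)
Your proposal is correct and follows essentially the paper's argument: integrate by parts on each phase using \eqref{FBP2} and \eqref{BCaw}, bound the bulk terms with Lemma \ref{lem4} (uniform in $n$ by Lemma \ref{lem1}), and control the flux term at $x=1$ with the one-dimensional trace estimate. The only cosmetic difference is that you bound $p$ directly by (A1), whereas the paper appeals to the $L^\infty$ bound on $u_n$ from Lemma \ref{lem-c}, which is not actually needed here.
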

\begin{proof}
By applying H\"{o}lder's inequality and  Lemma \ref{lem4}, we have
\begin{align*}
&\left| \int^{T_0}_0 \int^1_0 w_{nt} \eta dx dt \right| \\
&= \left| d_l \int^{T_0}_0 \int^{e_n}_0 w_{nxx} \eta dx dt + d_a \int^{T_0}_0 \int^1_{e_n} w_{nxx} \eta dx dt \right| \\
&\leq d_l \int^{T_0}_0 \int^{1}_0 |w_{nx} \eta_x| dx dt + d_a \int^{T_0}_0 \int^1_{0} |w_{nx} \eta_x| dx dt\\
&\quad \, + \int^{T_0}_0 |b_1 p(u_n(\cdot,1) + \theta_c) - b_2 p(u_b)| |\eta(\cdot,1)|dt\\
&\leq C_5 |\eta|_{L^2(0,T;W^{1,2}(0,1))} + C_6 \int^{T_0}_0 |\eta(\cdot,1)|dt\\
&\leq C_7 |\eta|_{L^2(0,T;W^{1,2}(0,1))} \quad \mbox{for } \eta \in L^2(0,T;W^{1,2}(0,1)) \mbox{ and } n \geq 1,
\end{align*}
where $C_5$, $C_6$ and $C_7$ are positive constants. Here, we note that Lemma \ref{lem-c} implies boundedness of $\{ u_n \}$ in $L^{\infty}(Q(T_0))$. Thus, Lemma \ref{lem-fu} is proved.
\end{proof}
\begin{lemma}\label{esen}
Under the same assumption as in Lemma \ref{lem-fu}, there exists a positive constant $C_8$ such that
\begin{align*}
\int^{T_0}_0 |e'_n|^4 dt \leq C_8 \quad \mbox{for } n \geq 1.
\end{align*}
\end{lemma}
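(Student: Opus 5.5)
We need a uniform bound on $\int_0^{T_0}|e_n'|^4\,dt$. Since $lw_n(t,e_n)e_n' = k_l u_{nx}(t,e_n-) - k_a u_{nx}(t,e_n+)$ and $w_n(t,e_n(t))\ge \delta_1$ (Lemma \ref{lem-b}), we have
\begin{align*}
|e_n'(t)|\le \frac{1}{l\delta_1}\bigl(k_l|u_{nx}(t,e_n(t)-)| + k_a|u_{nx}(t,e_n(t)+)|\bigr).
\end{align*}
It therefore suffices to bound $u_{nx}(\cdot,e_n\pm)$ uniformly in $L^4(0,T_0)$. By Proposition \ref{prop1}, $\delta\le e_n\le 1-\delta$ on $[0,T_0]$, so both subintervals $(0,e_n(t))$ and $(e_n(t),1)$ have length at least $\delta$. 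This keeps the constants in the one-dimensional Gagliardo--Nirenberg inequality uniform in $n$ and $t$:
\begin{align*}
|v|_{L^\infty(a,b)}\le C\bigl(|v|_{L^2(a,b)}^{1/2}|v_x|_{L^2(a,b)}^{1/2} + |v|_{L^2(a,b)}\bigr),\qquad b-a\ge\delta.
\end{align*}
We apply it to $v=u_{nx}(t)$ on each phase, with $v_x=u_{nxx}$. This gives
\begin{align*}
|u_{nx}(t,e_n\pm)|^4\le C\bigl(|u_{nx}(t)|_{L^2}^2|u_{nxx}(t)|_{L^2(\text{phase})}^2+|u_{nx}(t)|_{L^2}^4\bigr).
\end{align*}

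By Lemma \ref{lem-c}, $|u_n(t)|_{W^{1,2}(0,1)}$ is bounded uniformly in $n$ and $t$. It remains to bound $\int_0^{T_0}|u_{nxx}|^2_{L^2(0,e_n)}+|u_{nxx}|^2_{L^2(e_n,1)}\,dt$ uniformly. For this we use the equations \eqref{EQl}: $u_{nxx}=(c_l/k_l)u_{nt}$ in $Q_l$ and $u_{nxx}=(c_a/k_a)u_{nt}$ in $Q_a$. Hence $\int_0^{T_0}\!\int_0^1|u_{nxx}|^2$ (taken piecewise) is controlled by $C\int_0^{T_0}|u_{nt}|_{L^2(0,1)}^2\,dt\le CC_*$, again by Lemma \ref{lem-c}. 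Integrating the fourth-power inequality over $(0,T_0)$ then gives $\int_0^{T_0}|u_{nx}(t,e_n\pm)|^4dt\le C(C_*^2+C_*^2T_0)$, and thus $\int_0^{T_0}|e_n'|^4dt\le C_8$.

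The main point to check is that the Gagliardo--Nirenberg constant is uniform. This is exactly where the lower bound $\delta$ on the lengths of both phases from Proposition \ref{prop1} enters: the constant depends only on a lower bound for the interval length. Equivalently, one can rescale each phase to the unit interval, or extend by reflection.

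We also need that $C_*$ does not depend on $n$. This requires $E_n(0)$ to be bounded uniformly, which follows from $u_{0n}\to u_0$ in $W^{1,2}(0,1)$ (Lemma \ref{lem1}); this convergence bounds $|u_{0nx}|_{L^2}$ and, via the embedding into $C([0,1])$, also $G_n(0)$. Finally, the traces $u_{nx}(t,e_n(t)\pm)$ are meaningful pointwise a.e. in $t$ because $u_{nx}(t)\in W^{1,2}$ on each phase for a.e. $t$, and Lemma \ref{lem-strong} gives them in $L^\infty(0,T_n)$ for each fixed $n$.
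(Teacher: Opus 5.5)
Your proof is correct and follows the paper's argument. Both use the Stefan condition together with the lower bound $w_n\ge\delta_1$ from Lemma \ref{lem-b}, a one-dimensional trace/Gagliardo--Nirenberg bound on $u_{nx}(\cdot,e_n\pm)$ whose constant stays uniform because $\delta\le e_n\le 1-\delta$, and the estimates of Lemma \ref{lem-c}, with $u_{nxx}$ controlled by $u_{nt}$ through \eqref{EQl}; the only difference is cosmetic, since the paper proves the trace bound by hand (integrating $\partial_x\{(1-x)|u_{nx}|^2\}$ over $(e_n,1)$, and analogously over $(0,e_n)$) while you quote the Gagliardo--Nirenberg inequality with a constant depending only on a lower bound for the interval length.
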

\begin{proof}
Easily, we have
\begin{align*}
&\int^t_0 |e'_n|^4 d\tau \leq \frac{16}{l^4 \delta^4} \int^t_0 (k_l^4 |u_{nx}(\tau, e_n(\tau)-)|^4 + k_a^4 |u_{nx}(\tau, e_n(\tau)+)|^4) d\tau\\
&\hspace{80mm} \quad \mbox{for } n \geq1, 0 \leq t \leq T_n,
\end{align*}
where $\delta$ is the positive constant given in Proposition \ref{prop1}. Here, we note that
\begin{align*}
&\int^t_0  |u_{nx}(\tau, e_n(\tau)+)|^4 d\tau\\
&\leq \int^t_0 \left( \int^1_{e_n(\tau)} \frac{\partial}{\partial x} \left \{ (1-x) |u_{nx}(\tau,x)|^2 \right \} \frac{1}{1-e_n(\tau)} dx \right)^2 d\tau\\
&\leq \frac{1}{\delta^2} \int^t_0 \left( \int^1_{e_n(\tau)} |u_{nx}(\tau,x)|^2  dx + 2 \int^1_{e_n(\tau)} |u_{nxx}(\tau,x)| |u_{nx}(\tau,x)| dx  \right)^2 d\tau\\
&\leq \frac{4}{\delta^2} \int^t_0 \left(  |u_{nx}(\tau)|^4_{L^2(0,1)}   +  |u_{nxx}(\tau)|^2_{L^2(0,1)} |u_{nx}(\tau)|^2_{L^2(0,1)}  \right)^2 d\tau \quad \mbox{for } n \geq1.
\end{align*}
For $\int^t_0  |u_{nx}(\tau, e_n(\tau)-)|^4 d\tau$ we have same estimate. Thus, on account of Lemma \ref{lem-c} this lemma has been proved.
\end{proof}
By Lemmas \ref{lem-c} and \ref{lem4}, the next lemma holds. 
\begin{lemma}\label{convergence}
Let $(u_n, w_n, e_n)$ be a solution of P$(u_{0n}, w_{0n})$ for $n \geq 1$. Then there exists a subsequence $\{ n_j \} \subset \{ n \}$, and $u \in W^{1,2}(0,T_0; L^2(0,1)) \cap L^{\infty}(0,T_0; W^{1,2}(0,1))$, $w \in L^2(0,T_0;W^{1,2}(0,1)) \cap L^{\infty}(0,T_0; L^2(0,1))$ and $e \in W^{1,4}(0,T_0)$ such that
\begin{align}
&u_{n_j} \to u \mbox{ weakly in } W^{1,2}(0, T_0; L^2(0, 1)), \mbox{ weakly$\ast$ in } L^{\infty}(0, T_0; W^{1,2}(0, 1)),\label{con-u1}\\
&\hspace{16mm} \mbox{and in } C([0,T_0] \times [0,1]),\label{con-u}\\
&w_{n_j} \to w \mbox{ weakly in } L^{2}(0, T_0; W^{1,2}(0, 1)), \mbox{ weakly$\ast$ in } L^{\infty}(0, T_0; L^{2}(0, 1)),\label{con-w1}\\
&\hspace{16mm} \mbox{and strongly in } L^{2}(0, T_0; L^2(0, 1)),\label{con-w2}\\
& e_{n_j} \to e \mbox{ in } C([0,T_0]), \mbox{ weakly in } W^{1,4}(0,T_0) \quad \mbox{as } j \to \infty.\label{con-e}
\end{align}
Moreover, it holds that
\begin{align}
w_{n_j}(\cdot, e_{n_j}) \to w(\cdot,e)  \mbox{ in } L^2(0, T_0) \quad \mbox{as } j \to \infty.\label{con-wn}
\end{align}
\end{lemma}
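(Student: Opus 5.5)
The plan is a compactness argument based on the uniform bounds already obtained. By Lemma \ref{lem-c}, $\{u_n\}$ is bounded in $W^{1,2}(0,T_0;L^2(0,1))\cap L^\infty(0,T_0;W^{1,2}(0,1))$. By Lemma \ref{lem4}, $\{w_n\}$ is bounded in $L^2(0,T_0;W^{1,2}(0,1))\cap L^\infty(0,T_0;L^2(0,1))$. By Lemma \ref{lem-fu}, $\{w_{nt}\}$ is bounded in $L^2(0,T_0;W^{1,2}(0,1)^*)$. By Proposition \ref{prop1} together with Lemma \ref{esen}, $\{e_n\}$ is bounded in $W^{1,4}(0,T_0)$ with $\delta\le e_n\le 1-\delta$.

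\textbf{Extracting the limits.} Banach--Alaoglu gives the weak and weak$\ast$ limits in \eqref{con-u1}, \eqref{con-w1} and the weak convergence in \eqref{con-e}. For \eqref{con-u}, the bounds on $u_n$ give equicontinuity in time with values in $L^2$ (Hölder $1/2$, as in \eqref{Aomo}) and boundedness in $W^{1,2}(0,1)\subset C^{1/2}([0,1])$. Interpolating $|v|_{C}\le C|v|_{L^2}^{1/2}|v|_{W^{1,2}}^{1/2}$ makes $u_n$ equicontinuous on $[0,T_0]\times[0,1]$, so Ascoli--Arzelà applies. Equivalently, one can invoke Aubin--Lions--Simon with the triple $W^{1,2}\Subset C\subset L^2$. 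For \eqref{con-w2}, Aubin--Lions applies to the triple $W^{1,2}(0,1)\Subset L^2(0,1)\subset (W^{1,2})^*$. The uniform convergence of $e_n$ in \eqref{con-e} follows from the compact embedding $W^{1,4}(0,T_0)\subset C([0,T_0])$. A diagonal subsequence serves all of these at once, and the limits inherit the stated regularity by weak lower semicontinuity.

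\textbf{The trace convergence \eqref{con-wn}.} I expect this to be the main obstacle, because $w_n$ converges only strongly in $L^2(Q(T_0))$, while \eqref{con-wn} requires control of a moving pointwise trace. I split
\[
w_{n_j}(t,e_{n_j}(t))-w(t,e(t))=\bigl[w_{n_j}(t,e_{n_j}(t))-w(t,e_{n_j}(t))\bigr]+\bigl[w(t,e_{n_j}(t))-w(t,e(t))\bigr].
\]

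For the first bracket, I use the one-dimensional interpolation $\sup_x|v|^2\le C|v|_{L^2}\,|v|_{W^{1,2}}$ for $v=w_{n_j}(t)-w(t)$. Integrating in $t$ and applying Cauchy--Schwarz gives a bound by $C|w_{n_j}-w|_{L^2(Q)}\,|w_{n_j}-w|_{L^2(0,T_0;W^{1,2})}$. The first factor tends to $0$ by \eqref{con-w2}, and the second is bounded by Lemma \ref{lem4}.

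For the second bracket, for a.e.\ $t$ we have $w(t)\in W^{1,2}(0,1)$, so $|w(t,e_{n_j}(t))-w(t,e(t))|\le |w_x(t)|_{L^2}|e_{n_j}(t)-e(t)|^{1/2}$. Squaring and integrating in $t$ bounds this term by $|w_x|^2_{L^2(Q)}\,|e_{n_j}-e|_{C([0,T_0])}$, which tends to $0$ by \eqref{con-e}.

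Combining the two estimates yields \eqref{con-wn}, which completes the proof.
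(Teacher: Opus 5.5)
Your proposal is correct and essentially follows the paper's argument. You use the same compactness tools to get \eqref{con-u1}--\eqref{con-e}: Banach--Alaoglu, Aubin--Lions with the bound on $w_{nt}$ from Lemma \ref{lem-fu}, and the compact embedding $W^{1,4}(0,T_0)\subset C([0,T_0])$. For \eqref{con-wn} you combine the same two ingredients, the $\frac12$-H\"older bound coming from $w_x\in L^2$ together with $e_{n_j}\to e$ uniformly, and the one-dimensional interpolation inequality together with strong $L^2$ convergence; the only difference is cosmetic, since you split through $w(t,e_{n_j}(t))$ where the paper splits through $w_{n_j}(t,e(t))$.
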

\begin{proof}
It is easy to take a subsequence satisfying \eqref{con-u}, \eqref{con-w1} and \eqref{con-e}. In addition, by  Aubin's compact theorem implies existence of a subsequence such that \eqref{con-w2} holds. To prove  \eqref{con-wn}, we see that
\begin{align*}
&\int^{T_0}_0 |w_{n_j}(t, e_{n_j}(t)) - w(t,e(t))|^2 dt\\
& \leq 2 \int^{T_0}_0 |w_{n_j}(t, e_{n_j}(t)) - w_{n_j}(t,e(t))|^2 dt + 2 \int^{T_0}_0 |w_{n_j}(t, e(t)) - w(t,e(t))|^2 dt\\
&=: I_{1_j}(t) + I_{2_j}(t) \quad \mbox{for a.e. } t \in [0,T_0].
\end{align*}
Thanks to \eqref{wwx} and \eqref{con-e}, we see that
\begin{align*}
I_{1_j}(t) 
&\leq 2 \sup_{0 \leq t \leq T_0}|e_{n_j}(t) - e(t)| \int^{T_0}_0 \int^1_0 |w_{n_jx}(t, x)|^2 dx  dt \\
&\to 0  \quad \mbox{as } j \to \infty.
\end{align*}
Before proving the convergence of $I_{2_j}$, we note that $\delta \leq e_n \leq 1-\delta$ on $[0,T_0]$ for $n \geq 1$. Accordingly, by \eqref{con-w2} and H\"{o}lder's inequality, 
\begin{align*}
&I_{2_j}(t)\\ 
&\leq 2 \int^{T_0}_0\!\!\! \int^{e(t)}_0 \frac{\partial}{\partial x} \left| \frac{x}{e(t)} (w_{n_j}(t,x) - w(t,x)) \right|^2 dx dt\\
&\leq \frac{4}{\delta} \left( \int^{T_0}_0\!\!\! \int^{1}_0 |w_{n_j}(t,x) - w(t,x)|^2 dx dt \right)^{1/2}\\
&\quad \,  + 4 \left( \int^{T_0}_0\!\!\! \int^{1}_0 |w_{n_j}(t,x) - w(t,x)|^2 dx dt \right)^{1/2}\left( \int^{T_0}_0 \!\!\! \int^{1}_0 |w_{n_jx}(t,x) - w_x(t,x)|^2 dx dt \right)^{1/2}\\
&\to 0  \quad \mbox{as } j \to \infty.
\end{align*}
Thanks to these convergences, we have proved this lemma.
\end{proof}
\begin{lemma}\label{u-eq}
Let $u$ be the limit function given by Lemma \ref{convergence}. Then it holds that
\begin{align*}
c_l u_{t} = k_l u_{xx} \quad \mbox{a.e. in } Q_l(T_0,e),\quad c_a u_{t} = k_a u_{xx} \quad \mbox{a.e. in } Q_a(T_0,e).
\end{align*}
\end{lemma}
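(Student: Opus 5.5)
We pass to the limit in the equations satisfied by $(u_{n_j}, e_{n_j})$, working on compact subsets that stay a fixed distance away from the limit free boundary. The idea is to use uniform convergence of $e_{n_j}$ so that these sets eventually lie entirely in one phase of every approximate problem.

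\emph{Step 1: fix a test set.} Fix a point $(t_1,x_1)\in Q_l(T_0,e)$, so that $x_1<e(t_1)$. Since $e$ is continuous, there exist $\varepsilon>0$ and a rectangle $R=[t_1-\varepsilon,t_1+\varepsilon]\times[0\vee(x_1-\varepsilon),x_1+\varepsilon]$ with $\max_R x \le e(t)-2\varepsilon$ for all $t$ in the time interval of $R$. By \eqref{con-e}, $e_{n_j}\to e$ uniformly on $[0,T_0]$, so for all large $j$ we have $\max_R x\le e_{n_j}(t)-\varepsilon$. Hence $R\subset Q_l(T_0,e_{n_j})$, and by Lemma \ref{lem-strong} the equation $c_l u_{n_j t}=k_l u_{n_jxx}$ holds a.e.\ in $R$.

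\emph{Step 2: pass to the limit in the weak form.} Take $\varphi\in C_0^\infty(\mathrm{int}\,R)$. The equation on $R$ gives
\[
c_l\int\!\!\int_R u_{n_jt}\varphi\,dx\,dt=-k_l\int\!\!\int_R u_{n_jx}\varphi_x\,dx\,dt .
\]
Both sides converge by \eqref{con-u1}: $u_{n_jt}\rightharpoonup u_t$ weakly in $L^2(Q(T_0))$, and $u_{n_jx}\rightharpoonup u_x$ weakly$*$ in $L^\infty(0,T_0;L^2(0,1))$, hence weakly in $L^2$. In the limit, $c_lu_t=k_lu_{xx}$ holds in $\mathcal D'(\mathrm{int}\,R)$.

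\emph{Step 3: identify $u_{xx}$ as an $L^2$ function.} The distributional identity gives $u_{xx}=\frac{c_l}{k_l}u_t\in L^2(R)$. Consequently $u_{xx}$ is an $L^2$ function locally in $Q_l(T_0,e)$ and the equation holds a.e. there. Covering $Q_l(T_0,e)$, which is open because $e$ is continuous, by countably many such rectangles yields $c_lu_t=k_lu_{xx}$ a.e.\ in $Q_l(T_0,e)$. The region $Q_a(T_0,e)$ is handled identically, with $x_1>e(t_1)$, $c_a$, $k_a$, and rectangles reaching up to $x=1$; the boundary $x=1$ causes no trouble because the test functions are compactly supported in the interior.

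\emph{Step 4: global $L^2$ bound for $u_{xx}$.} To obtain $u_{xx}\in L^2(Q_l(T_0,e))$ globally, as required in (S2), we note that $\int\!\!\int_{Q_l}|u_{xx}|^2=(c_l/k_l)^2\int\!\!\int_{Q_l}|u_t|^2\le C$. Alternatively, this follows by weak lower semicontinuity, since $u_{n_jxx}$ is bounded in $L^2$ of each phase by $(c/k)|u_{n_jt}|$ and Lemma \ref{lem-c}.

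The main obstacle is Step 1, namely making sure that a fixed interior test region lies in the same phase for all large $j$. The uniform convergence $e_{n_j}\to e$ in $C([0,T_0])$ supplies exactly this, so the remaining steps are routine weak-limit arguments.
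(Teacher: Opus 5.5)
Your proof is correct. Its overall strategy matches the paper's: pass to the limit in the weak form of the heat equation, using the weak convergences from Lemma \ref{convergence}. It differs in how the test function is made admissible for the approximate problems.

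\textbf{How the paper does it.} The paper fixes $\eta\in C_0^\infty(Q_l(T_0,e))$ and transports it to the approximate crumb region via $\eta_{n_j}(t,x)=\eta\bigl(t,\frac{e(t)}{e_{n_j}(t)}x\bigr)$, so that $\eta_{n_j}$ is supported in $Q_l(T_0,e_{n_j})$ for every $j$. It then checks that $\eta_{n_j}\to\eta$ and $\eta_{n_jx}\to\eta_x$ strongly in $L^2(Q(T_0))$, using $e/e_{n_j}\to 1$ uniformly. Finally it pairs these strong convergences with the weak convergences of $u_{n_jt}$ and $u_{n_jx}$.

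\textbf{How you do it.} You observe that a test function supported at positive distance from the graph of $e$ lies in $Q_l(T_0,e_{n_j})$ for all large $j$, by the uniform convergence $e_{n_j}\to e$. So the test function never changes, no change of variables is needed, and only weak convergence against a fixed function is used. Your reduction to rectangles and the countable covering are more than necessary. The compact support of any $\varphi\in C_0^\infty(Q_l(T_0,e))$ already lies at positive distance from the graph of $e$, so the same reasoning applies to $\varphi$ directly.

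\textbf{What your version adds.} In Step 3 you explicitly identify the distributional $u_{xx}$ with the $L^2$ function $\frac{c_l}{k_l}u_t$. The paper simply writes $-\int\!\!\int k_l u_x\eta_x=\int\!\!\int k_l u_{xx}\eta$ and does not otherwise derive $u_{xx}\in L^2(Q_l(T_0,e))$, $L^2(Q_a(T_0,e))$. Your argument therefore also supplies that part of (S2).

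\textbf{What the paper's version adds.} The rescaling gives an identity valid for every $j$ rather than only eventually. Nothing in this lemma needs that.
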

\begin{proof}
Let $(u_{n_j}, w_{n_j}, e_{n_j})$ be the solution of P$(u_{0n_j}, w_{0n_j})$ for $j \geq 1$. For any $\eta \in C_0^{\infty}(Q_l(T_0,e))$ put $\eta_{n_j}(t,x) = \eta\left(t,\frac{e(t)}{e_{n_j}(t)}\right)$ for $(t,x) \in (0,T_0) \times (0,1)$ and $j \geq 1$. Then we get
 \begin{align}
\int^{T_0}_0 \int^{e_{n_j}}_0 c_l u_{n_jt} \eta_{n_j} dx dt &=\int^{T_0}_0 \int^{e_{n_j}}_0 k_l u_{n_jxx} \eta_{n_j} dxdt\notag\\
&=- \int^{T_0}_0 \int^{e_{n_j}}_0 k_l u_{n_jx} \eta_{n_jx} dx dt\label{ulimit}.
\end{align}
Let $\eta = 0$ on $\overline{Q_l(T_0,e)}$. It is clear that $\eta_{n_j} = 0$ on $\overline{Q_l(T_0,e_{n_j})}$ for $j \geq 1$. Accordingly, for the left hand side of \eqref{ulimit}, we have
 \begin{align*}
&\int^{T_0}_0 \int^{e_{n_j}}_0 c_l u_{n_jt} \eta_{n_j} dx dt  - \int^{T_0}_0 \int^{e}_0 c_l u_t \eta dx dt\\
&=\int^{T_0}_0 \int^1_0 c_l u_{n_jt} \eta_{n_j} dx dt  - \int^{T_0}_0 \int^1_0 c_l u_t \eta dx dt\\
&=\int^{T_0}_0 \int^1_0 c_l u_{n_jt} (\eta_{n_j} - \eta) dx dt  + \int^{T_0}_0 \int^1_0 c_l (u_{n_jt} -u_t) \eta dx dt\\
&=: I_{1_j} + I_{2_j} \quad \mbox{for } j \geq 1.
\end{align*}
Since $\eta_{n_j} \to \eta$ in $L^{2}(0, T_0; L^{2}(0, 1))$ as $j \to \infty$, based on Lemma \ref{lem-c}, we see  that
\begin{align*}
|I_{1_j}| &\leq c_l \left( \int^{T_0}_0 \int^1_0 |u_{n_jt}|^2 dx dt \right)^{1/2} \left( \int^{T_0}_0 \int^1_0 |\eta_{n_j} - \eta|^2 dx dt \right)^{1/2}\\
& \to 0  \quad \mbox{as } j \to \infty.
\end{align*}
Also, by Lemma \ref{convergence} it is easy to see that $I_{2_j} \to 0$ as $j \to \infty$. Thus, we have
 \begin{align*}
\int^{T_0}_0 \int^{e_{n_j}}_0 c_l u_{n_jt} \eta_{n_j} dx dt  \to \int^{T_0}_0 \int^{e}_0 c_l u_t \eta dx dt \quad \mbox{as } j \to \infty.
\end{align*}
Because  $\eta_{n_jx} \to \eta_x$ in $L^{2}(0, T_0; L^{2}(0, 1))$ as $j \to \infty$, similarly to above, we see that
 \begin{align*}
&\int^{T_0}_0 \int^{e_{nj}}_0 k_l u_{njx} \eta_{njx} dx dt\\
 &\to - \int^{T_0}_0 \int^{e}_0 k_l u_{x} \eta_{x} dx dt
= \int^{T_0}_0 \int^{e}_0 k_l u_{xx} \eta dx dt  \quad \mbox{as } j \to \infty.
\end{align*}
Therefore, we have
 \begin{align*}
\int^{T_0}_0 \int^{e}_0 c_l u_t \eta dx dt   = \int^{T_0}_0 \int^{e}_0 k_l u_{xx} \eta dx dt \quad \mbox{for any } \eta \in C_0^{\infty}(Q_l(T_0,e)).
 \end{align*}
 Also, we can get
  \begin{align*}
\int^{T_0}_0 \int_{e}^1 c_a u_t \eta dx dt   = \int^{T_0}_0 \int^{1}_e k_a u_{xx} \eta dx dt \quad \mbox{for any } \eta \in C_0^{\infty}(Q_a(T_0,e)).
 \end{align*}
 Thus, Lemma \ref{u-eq} has been proved.
\end{proof}
\begin{lemma}\label{fbc-con11}
 Let $(u_n, w_n, e_n)$ be a solution of P$(u_{0n}, w_{0n})$ on $[0,T_0]$ for $n \geq 1$ and $\{ n_j \}$ be the subsequence given in Lemma \ref{convergence}. Then it holds that
\begin{align}
&u_{n_jx}(\cdot,0+) \to u_x(\cdot,0+) \quad \mbox{weakly in } L^2(0, T_0),\label{unx0}\\
&u_{n_jx}(\cdot,1-) \to u_x(\cdot,1-) \quad \mbox{weakly in } L^2(0, T_0) \quad \mbox{as } j \to \infty. \label{unx1}
\end{align}
\end{lemma}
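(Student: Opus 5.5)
Since $\delta \leq e_{n} \leq 1-\delta$ on $[0,T_0]$ for all $n$ by Proposition \ref{prop1}, and $e_{n_j} \to e$ uniformly by \eqref{con-e}, we may fix $j_0$ so that for $j \geq j_0$ the strips $(0,T_0)\times(0,\delta/2)$ and $(0,T_0)\times(1-\delta/2,1)$ lie inside $Q_l(T_0,e_{n_j})\cap Q_l(T_0,e)$ and $Q_a(T_0,e_{n_j})\cap Q_a(T_0,e)$, respectively. On these strips we have no free boundary, so the traces $u_{n_jx}(\cdot,0+)$ and $u_{n_jx}(\cdot,1-)$ can be written by an integration against a cut-off. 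Concretely, take $\zeta \in C^\infty([0,1])$ with $\zeta = 1$ near $x=1$ and $\zeta = 0$ on $[0,1-\delta/2]$. For $\phi \in L^2(0,T_0)$ we write
\begin{align*}
\int^{T_0}_0 u_{n_jx}(t,1-)\phi(t)\,dt = \int^{T_0}_0\!\!\int^1_{1-\delta/2} \bigl( u_{n_jxx}\zeta + u_{n_jx}\zeta_x \bigr)\phi \,dx\,dt
= \int^{T_0}_0\!\!\int^1_{1-\delta/2} \Bigl( \frac{c_a}{k_a}u_{n_jt}\zeta + u_{n_jx}\zeta_x \Bigr)\phi \,dx\,dt,
\end{align*}
using \eqref{EQl} in the crust region. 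The analogous identity with a cut-off $\xi$ equal to $1$ near $0$ and supported in $[0,\delta/2]$, and with $c_l/k_l$, gives $u_{n_jx}(\cdot,0+)$ (which is in fact $0$ by \eqref{BCl}, but the argument does not need this).

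Lemma \ref{lem-c} bounds $u_{n_jt}$ in $L^2(Q(T_0))$ and $u_{n_jx}$ in $L^\infty(0,T_0;L^2(0,1))$ uniformly in $j$. So the right-hand side is bounded by $C|\phi|_{L^2(0,T_0)}$, and $\{u_{n_jx}(\cdot,1-)\}$ is bounded in $L^2(0,T_0)$. By \eqref{con-u1}, $u_{n_jt} \to u_t$ and $u_{n_jx} \to u_x$ weakly in $L^2(Q(T_0))$. Since $\zeta\phi$ and $\zeta_x\phi$ belong to $L^2(Q(T_0))$, the right-hand side converges to
\[
\int^{T_0}_0\!\!\int^1_{1-\delta/2}\Bigl(\frac{c_a}{k_a}u_t\zeta+u_x\zeta_x\Bigr)\phi\,dx\,dt .
\]

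It remains to identify this limit with $\int^{T_0}_0 u_x(t,1-)\phi\,dt$. By Lemma \ref{u-eq}, $u_{xx} = \frac{c_a}{k_a}u_t \in L^2$ on $(0,T_0)\times(1-\delta/2,1)\subset Q_a(T_0,e)$. Hence $u_x(t,\cdot)\in W^{1,2}(1-\delta/2,1)$ for a.e. $t$, and its trace $u_x(\cdot,1-)$ lies in $L^2(0,T_0)$. Integrating by parts back gives the identity. This proves \eqref{unx1}, and \eqref{unx0} follows identically. Uniqueness of the limit makes the convergence hold along the whole subsequence $\{n_j\}$.

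The main obstacle is the first step: making sure the cut-off strips stay away from both $e_{n_j}$ and $e$ uniformly in $j$. This is exactly where the uniform bound $\delta\le e_n\le 1-\delta$ of Proposition \ref{prop1} is used, together with the justification of Lemma \ref{u-eq} on those strips.
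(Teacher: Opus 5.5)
Your proof is correct and is essentially the paper's argument. The paper works on the strips $(0,T_0)\times(0,\delta)$ and $(0,T_0)\times(1-\delta,1)$, which stay away from $e_{n_j}$ and $e$ by Proposition \ref{prop1}. There the equation gives $u_{n_jxx}=\frac{c}{k}u_{n_jt}$, so $u_{n_jxx}\rightharpoonup u_{xx}$ and hence $u_{n_jx}\rightharpoonup u_x$ weakly in $L^2(0,T_0;W^{1,2})$. The paper then uses that the trace map $\eta\mapsto\int_0^{T_0}v(t)\eta(t,0)\,dt$ is a bounded linear functional on that space; your cut-off integration by parts is simply an explicit proof of this trace continuity.
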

\begin{proof}
Lemmas \ref{convergence} and \ref{u-eq} imply that $u_{n_{j}xx} \to u_{xx}$ weakly in $L^2(0,T_0; L^2(0, \delta))$ as $j \to \infty$, where $\delta$ is a positive constant satisfying $e_{n_j} \geq \delta$ on $[0,T_0]$ for $j \geq 1$. Therefore, $u_{n_{j}x} \to u_{x}$ weakly in $L^2(0,T_0; W^{1,2}(0, \delta))$ as $j \to \infty$. Here, for any $v \in L^2(0, T_0)$ let  $F_v(\eta) = \int^{T_0}_0 v(t) \eta(t,0) dt \mbox{ for } \eta \in L^2(0,T_0; W^{1,2}(0, \delta))$. Since the functional $F_v$ is linear and bounded on $L^2(0,T_0; W^{1,2}(0, \delta))$, $u_{n_jx}(\cdot,0+) \to u_x(\cdot,0+)$ weakly in  $L^2(0, T_0)$ as $j \to \infty$. Similarly, it holds that $u_{n_jx}(\cdot,1-) \to u_x(\cdot,1-)$ weakly in  $L^2(0, T_0)$ as $j \to \infty$. Thus, this lemma is true.
\end{proof}
Based on \cite{Ai}, we can show the convergence of the free boundary. To do this, we provide the following lemma.
\begin{lemma}\label{hate}
Let $(u_n, w_n, e_n)$ be a solution of P$(u_{0n}, w_{0n})$ for $n \geq 1$. Then for any $\varepsilon > 0$, there exist $\hat{e} \in C^2([0,T])$ and $n_{\varepsilon}$ such that $0 \leq e_n - \hat{e} \leq 4 \varepsilon$ for $n \geq n_{\varepsilon}$.
\end{lemma}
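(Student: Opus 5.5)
The plan is to build $\hat e$ by smoothing the limit free boundary and then pushing it down slightly, using the uniform convergence from Lemma \ref{convergence}. The statement is meant for the subsequence $\{n_j\}$ of Lemma \ref{convergence}, and I read "$n \geq n_\varepsilon$" along that subsequence. By \eqref{con-e}, $e_{n_j} \to e$ in $C([0,T_0])$, where $e \in W^{1,4}(0,T_0) \subset C([0,T_0])$ and $\delta \leq e \leq 1-\delta$ by Proposition \ref{prop1}.

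\textbf{Step 1 (smoothing).} Extend $e$ continuously to $\mathbb{R}$: set $e(t)=e(0)$ for $t<0$ and $e(t)=e(T_0)$ for $t>T_0$, or more generally extend constantly beyond whatever interval $[0,T]$ is used. Let $\tilde e = J_\rho \ast e$ with the standard mollifier $J_\rho$. Then $\tilde e \in C^\infty$. Since $e$ is uniformly continuous, we may choose $\rho$ so small that $|\tilde e - e| \leq \varepsilon$ on $[0,T_0]$. If quantitative control is wanted, it is available: $e \in W^{1,4}$ is Hölder continuous with exponent $3/4$, so $|\tilde e - e| \leq C\rho^{3/4}$.

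\textbf{Step 2 (uniform closeness of $e_{n_j}$).} By \eqref{con-e}, choose $n_\varepsilon$ such that $|e_{n_j} - e| \leq \varepsilon$ on $[0,T_0]$ for $n_j \geq n_\varepsilon$. Combining with Step 1 gives $|e_{n_j} - \tilde e| \leq 2\varepsilon$.

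\textbf{Step 3 (shift).} Define $\hat e = \tilde e - 2\varepsilon$. Then
\[
e_{n_j} - \hat e = (e_{n_j} - \tilde e) + 2\varepsilon \in [0, 4\varepsilon],
\]
which is exactly the claimed two-sided bound. Moreover $\hat e \in C^2$ on the whole time interval, because it is a smooth function minus a constant. If one also wants $\hat e > 0$, which is useful later for defining comparison functions on $\{x < \hat e(t)\}$, it suffices to take $\varepsilon < \delta/4$. Then $\hat e \geq e - 3\varepsilon \geq \delta - 3\varepsilon > 0$.

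\textbf{Main obstacle.} There is no real difficulty here. The only point needing care is the time interval. The statement writes $C^2([0,T])$, while the $e_n$ are only known on $[0,T_0]$. I would handle this by extending $e$ constantly before mollifying, so that $\hat e$ is defined and $C^2$ on $[0,T]$. The inequality $0 \leq e_n - \hat e \leq 4\varepsilon$ is then asserted on $[0,T_0]$.
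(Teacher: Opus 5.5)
Your proof is correct and is essentially the paper's argument: use the uniform convergence of the free boundaries, mollify, and shift down by $2\varepsilon$. The paper shifts first ($\overline{e}=e-2\varepsilon$) and then mollifies, which is the same construction since the mollifier has unit mass. Three of your steps are careful versions of points the paper states loosely: choosing the mollification radius via uniform continuity of $e$, extending $e$ constantly before mollifying, and restricting to the subsequence $\{n_j\}$. In particular, the paper takes radius $\varepsilon$ and asserts $|\hat{e}-\overline{e}|<\varepsilon$, which in general requires exactly your choice of a sufficiently small radius.
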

\begin{proof}
Since $e_n \to e$ in  $C([0,T_0])$ as $n \to \infty$, there exists $n_0 \in \mathbb{N}$ such that $|e_n - e|_{C([0,T_0])} < \varepsilon$ for $n \geq n_0$. By putting $\overline{e} = e - 2 \varepsilon$, we have $\overline{e} + \varepsilon < e_n < \overline{e} + 3 \varepsilon$ on $[0,T_0]$ for $n \geq n_0$. Moreover, by putting $\hat{e} = J_{\varepsilon} \ast \overline{e}$, we have $|\hat{e} - \overline{e}| < \varepsilon$ on $[0,T_0]$, where $J_{\varepsilon}$ is a standard mollifier in $\mathbb{R}$ and $\ast$ is the convolution. Thus, we have
 \begin{align*}
 \hat{e} &\leq |\hat{e} - \overline{e}| + \overline{e}\\
 &< e_n\\
 &\leq 3 \varepsilon + \hat{e} + |\hat{e} - \overline{e}|\\
 &\leq 4 \varepsilon + \hat{e} \quad \mbox{on } [0,T_0] \mbox{ for } n \geq n_0.
 \end{align*}
 Hence, it holds that $\hat{e} < e_n < 4 \varepsilon + \hat{e}$ on $[0,T_0]$ for $n \geq n_0$, namely, Lemma \ref{hate} is true.
\end{proof}
\begin{lemma}\label{fbc-con1}
(cf.  \cite{Ai} Lemma 2.4) Let $(u_n, w_n, e_n)$ be a solution of P$(u_{0n}, w_{0n})$ on $[0,T_0]$ for $n \geq 1$ and $\{ n_j \}$ be the subsequence given in Lemma \ref{convergence}. Then it holds that
\begin{align}
&u_{n_{j}x} \to u_x \quad \mbox{in } L^2(Q(T_0)),\label{unx}\\
&u_{n_jx}(\cdot, e_{n_j}\pm) \to u_x(\cdot, e\pm) \quad \mbox{in } L^2(0, T_0),\label{unxe}\\
&w_{n_j}(\cdot, e_{n_j})e'_{n_j} \to w(\cdot, e)e' \quad \mbox{weakly in } L^2(0, T_0) \quad \mbox{as } j \to \infty.\label{fbc-z}
\end{align}
\end{lemma}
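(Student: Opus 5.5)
The plan is to follow the strategy of \cite[Lemma 2.4]{Ai}, using the smooth curve $\hat e$ of Lemma \ref{hate} to separate the two phases uniformly in $j$.

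\textbf{Step 1: the convergence \eqref{unx}.} Fix $\varepsilon>0$ and take $\hat e$ from Lemma \ref{hate}, so that $\hat e<e_{n_j}<\hat e+4\varepsilon$ on $[0,T_0]$ for large $j$. Then $\{0<x<\hat e(t)\}\subset Q_l(T_0,e_{n_j})$ and $\{\hat e(t)+4\varepsilon<x<1\}\subset Q_a(T_0,e_{n_j})$ for all large $j$. On each of these sets $u_{n_jxx}=(c/k)u_{n_jt}$ is bounded in $L^2$ by Lemma \ref{lem-c}, and $u_{n_jx}$ is bounded in $L^\infty(0,T_0;L^2)$. Flatten the region by the change of variables $y=x/\hat e(t)$; this is legitimate because $\hat e\in C^2$, and it is only needed to apply a compactness theorem on a fixed domain. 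We then have
\begin{itemize}
\item $u_{n_jx}$ bounded in $L^2(0,T;W^{1,2})$;
\item $(u_{n_jx})_t=u_{n_jxt}$ bounded in $L^2(0,T;(W^{1,2})^*)$, since $\int u_{n_jxt}\phi=-\int u_{n_jt}\phi_x$ for compactly supported $\phi$.
\end{itemize}
Aubin--Lions then gives strong $L^2$ convergence of $u_{n_jx}$ to $u_x$ on both subregions. On the remaining strip of width $4\varepsilon$, the bound $|u_{n_jx}|_{L^\infty(L^2)}\le C$ alone does not give smallness. Instead, use the interpolation $|u_{n_jx}(t)|^2_{L^\infty}\le C|u_{n_jx}|_{L^2}(|u_{n_jx}|_{L^2}+|u_{n_jxx}|_{L^2})$ on each phase, as in the proof of Lemma \ref{esen}. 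This bounds the strip contribution by $C\varepsilon\int_0^{T_0}(1+|u_{n_jxx}|_{L^2})\,dt\le C\varepsilon$, and the same bound holds for $u_x$. Letting $j\to\infty$ and then $\varepsilon\to0$ gives \eqref{unx}.

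\textbf{Step 2: traces, \eqref{unxe}.} For the $+$ side, write, as in Lemma \ref{esen},
\[
u_{n_jx}(t,e_{n_j}+)=-\frac{1}{1-e_{n_j}}\int_{e_{n_j}}^1\partial_x\{(1-x)u_{n_jx}\}\,dx .
\]
Compare this with the same expression for $u$ at $e$. The difference splits into two parts:
\begin{itemize}
\item terms involving $u_{n_jx}-u_x$ in $L^2(Q)$, which go to zero by Step 1;
\item terms with mismatched intervals $[e_{n_j},e]$, which are controlled by $|e_{n_j}-e|_\infty^{1/2}$ times $L^2$ norms of $u_{xx}$, using $\delta\le e_{n_j}\le1-\delta$.
\end{itemize}
The genuinely delicate term is $\int_{e_{n_j}}^1(1-x)u_{n_jxx}\,dx$, because $u_{n_jxx}$ converges only weakly. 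I would handle it by a duality argument: test against $\psi\in L^2(0,T_0)$ to get weak convergence of the traces, and separately show convergence of the norms $\int|u_{n_jx}(\cdot,e_{n_j}+)|^2dt$. The norm convergence would come from the identity $|u_x(t,e+)|^2=-\int_e^1\partial_x\{(1-x)|u_x|^2\}dx/(1-e)$ together with strong convergence of $u_{n_jx}$ and weak convergence of $u_{n_jxx}$ in the product $u_{n_jx}u_{n_jxx}$. Weak times strong converges, so this yields strong $L^2$ convergence of the traces. The $-$ side is analogous, using $x/e_{n_j}$ in place of $(1-x)/(1-e_{n_j})$.

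\textbf{Step 3: \eqref{fbc-z}.} By \eqref{FBP1}, $l\,w_{n_j}(\cdot,e_{n_j})e'_{n_j}=k_lu_{n_jx}(\cdot,e_{n_j}-)-k_au_{n_jx}(\cdot,e_{n_j}+)$, which converges strongly in $L^2$ by Step 2. Alternatively, \eqref{con-wn} gives $w_{n_j}(\cdot,e_{n_j})\to w(\cdot,e)$ strongly in $L^2$, and \eqref{con-e} gives $e'_{n_j}\rightharpoonup e'$ weakly in $L^4$. A strong-$L^2$ times weak-$L^4$ product converges weakly in $L^{4/3}$, and the $L^2$ bound on the product, inherited from the right-hand side, upgrades this to weak convergence in $L^2$ with limit $w(\cdot,e)e'$.

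The main obstacle is Step 2. Passing weakly convergent second derivatives through moving-interval traces requires the careful product argument above, and the smallness on the strip in Step 1 requires the $L^2(0,T;L^2)$ control of $u_{xx}$ on each phase uniformly in $j$.
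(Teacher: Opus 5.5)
Your proof is correct, but Steps 1 and 2 take a genuinely different route from the paper.

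\textbf{Step 1.} For \eqref{unx} the paper uses no compactness theorem. It proves convergence of norms by integrating by parts, writing $\int\!\!\int|u_{n_jx}|^2$ as $-\int\!\!\int (c/k)\,u_{n_jt}u_{n_j}$ plus the boundary term $\int u_{n_jx}(\cdot,1)u_{n_j}(\cdot,1)\,dt$. The terms at the interface and at $x=0$ vanish because $u_{n_j}(t,e_{n_j})=0$ and because of the Neumann condition. It then passes to the limit using the uniform convergence \eqref{con-u}, weak convergence of $u_{n_jt}$, and Lemma \ref{fbc-con11} for the boundary trace. The $\hat e$-strip serves only to handle the moving domains.

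You instead obtain strong convergence away from the interface by Aubin--Lions on the flattened phase regions, and control the strip with the uniform bound on $\sup_x|u_{n_jx}|$. This never uses the boundary conditions or the Dirichlet condition at $e_{n_j}$, so it is more robust. The price is a compactness theorem on a transformed domain. One small correction: testing against compactly supported functions bounds the time derivative in $L^2(0,T_0;W^{-1,2})$, not in $(W^{1,2})^*$. That space still suffices for Aubin--Lions.

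\textbf{Step 2.} For \eqref{unxe} the paper uses a three-term triangle inequality through the fixed curve $\hat e$. Moving the evaluation point from $e_{n_j}$ to $\hat e$ costs $O(\varepsilon)$, by the $L^2$ bound on $u_{n_jxx}=(c_l/k_l)u_{n_jt}$. At $\hat e$, the trace interpolation inequality is applied to $u_{n_jx}-u_x$, which is small in $L^2$ by \eqref{unx} and bounded in $W^{1,2}$.

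Your weak-plus-norm argument is also valid and needs only uniform convergence of $e_{n_j}$. State the key fact explicitly: $u_{n_jxx}\chi_{\{x>e_{n_j}\}}=(c_a/k_a)u_{n_jt}\chi_{\{x>e_{n_j}\}}\rightharpoonup (c_a/k_a)u_t\chi_{\{x>e\}}=u_{xx}\chi_{\{x>e\}}$ weakly in $L^2(Q(T_0))$. This follows from dominated convergence for the characteristic functions together with Lemma \ref{u-eq}. With it, the bulk representations of the trace and of its square both pass to the limit as strong-times-weak pairings. Your preliminary splitting into mismatched-interval terms then becomes unnecessary.

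\textbf{Step 3.} This is essentially the paper's argument. The paper gets the uniform $L^2$ bound on $w_{n_j}(\cdot,e_{n_j})e'_{n_j}$ from $L^4$ bounds on both factors; you get it from \eqref{FBP1}. Note that your first sentence there does not by itself prove \eqref{fbc-z}. It does not identify the limit as $w(\cdot,e)e'$, and that identification is exactly what is needed to recover \eqref{FBP1} in the limit. The strong-times-weak argument supplies the identification, and combined with Step 2 it actually upgrades \eqref{fbc-z} to strong $L^2$ convergence.
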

\begin{proof} First, by \eqref{con-u} and \eqref{con-e} we have $u(t,e(t)) = 0$ for $0 \leq t \leq T_0$, and thanks to Lemma \ref{fbc-con11}, \eqref{BCl} and \eqref{BCau} hold. Based on these facts, we shall prove \eqref{unx}. Since $u_{n_{j}x} \to u_x$ weakly in $L^2(0,T_0; L^2(0,1))$ as $j \to \infty$, it is sufficient to show $|u_{n_{j}x}|_{L^2(0,T_0; L^2(0,1))} \to |u_x|_{L^2(0,T_0; L^2(0,1))}$ as $j \to \infty$.  It is clear that $\liminf_{j \to \infty} |u_{n_{j}x}|_{L^2(0,T_0; L^2(0,1))}^2 \geq |u_{x}|^2_{L^2(0,T_0;}$\\$_{L^2(0,1))}$. Accordingly, in order to show $\limsup_{j \to \infty} |u_{n_{j}x}|_{L^2(0,T_0; L^2(0,1))}^2 \leq |u_{x}|_{L^2(0,T_0;L^2(0,1))}^2$, for $\varepsilon > 0$ let $\hat{e}_{\varepsilon}$ and $n_{\varepsilon}$ be the function and the constant given by Lemma \ref{hate}. Also, take a positive integer $j_{\varepsilon}$ satisfying  $n_j \geq n_{\varepsilon}$ for $j \geq j_{\varepsilon}$ By integration by parts and Lemma \ref{u-eq}, we see that
 \begin{align*}
& |u_{n_{j}x}|_{L^2(0,T_0; L^2(0,1))}^2 -  |u_x|_{L^2(0,T_0; L^2(0,1))}^2\\
 &= \int^{T_0}_0 \int^{e}_0 \frac{c_l}{k_l} u_t u dx dt  -\int^{T_0}_0 \int^{e_{n_j}}_0 \frac{c_l}{k_l} u_{n_jt} u_{n_j} dx dt \\
& \quad \, + \int^{T_0}_0 \int_{e}^1 \frac{c_a}{k_a} u_t u dx dt  -\int^{T_0}_0 \int_{e_{n_j}}^1 \frac{c_a}{k_a} u_{n_jt} u_{n_j} dx dt \\
 & \quad \,  + \int^{T_0}_0 u_{x}(\cdot,1) u(\cdot,1) dt  -\int^{T_0}_0 u_{n_jx}(\cdot,1) u_{n_j}(\cdot,1) dt\\
 &=: I_{1_j} + I_{2_j} + I_{3_j} \quad \mbox{for } j \geq j_{\varepsilon}.
  \end{align*}
  In order to handle $I_{1_j}$ we note that $0 \leq e_{n_j} - \hat{e}_{\varepsilon} \leq 4 \varepsilon$ on $[0,T_0]$ for $j \geq j_{\varepsilon}$, namely, $0 \leq e - \hat{e}_{\varepsilon} \leq 4 \varepsilon$ on $[0,T_0]$. Obviously, it holds that
   \begin{align*}
| I_{1_j}|&\leq \left| \int^{T_0}_0 \int^{\hat{e}_{\varepsilon} }_0 \frac{c_l}{k_l} u_t u dx dt + \int^{T_0}_0 \int^{e}_{\hat{e}_{\varepsilon} } \frac{c_l}{k_l} u_t u dx dt \right.\\
&\quad \, \left. -\int^{T_0}_0 \int^{\hat{e}_{\varepsilon}}_0 \frac{c_l}{k_l} u_{n_jt} u_{n_j} dx dt  -\int^{T_0}_0 \int^{e_{n_j}}_{\hat{e}_{\varepsilon}} \frac{c_l}{k_l} u_{n_jt} u_{n_j} dx dt \right|\\
&\leq \left|\int^{T_0}_0 \int^{\hat{e}_{\varepsilon}}_0 \frac{c_l}{k_l} (u_t - u_{n_jt} ) u dx dt \right| + \left|\int^{T_0}_0 \int^{\hat{e}_{\varepsilon}}_0 \frac{c_l}{k_l} u_{n_jt} (u - u_{n_j} ) dx dt \right|\\
&\quad \, + \left|\ \int^{T_0}_0 \int^{e}_{\hat{e}_{\varepsilon} } \frac{c_l}{k_l} u_t u dx dt \right| + \left| \int^{T_0}_0 \int^{e_{n_j}}_{\hat{e}_{\varepsilon}} \frac{c_l}{k_l} u_{n_jt} u_{n_j} dx dt \right|\\
&=:  I_{11_j} + I_{12_j} + I_{13_j}  + I_{14_j} \quad \mbox{for } j \geq j_{\varepsilon}.
  \end{align*} 
 Due to the weakly convergence of $\{ u_{n_jt}\}$, we see that $I_{11_j} \to 0$ as $j \to \infty$.  In addition,
    thanks to Lemma \ref{lem-c} and \eqref{con-u}, we have
  \begin{align*}  
  I_{12_j} &\leq  \frac{c_l}{k_l} T_0^{1/2} |u - u_{n_j}|_{ L^{\infty}(Q(T_0))}  \left( \int^{T_0}_0 \int^{1}_0  |u_{n_jt}|^2 dx dt \right)^{1/2} \\
  &\to 0 \quad \mbox{as } j \to \infty.
   \end{align*}  
   By Lemma \ref{lem-c}, we obtain
     \begin{align*}  
  I_{13_j} &\leq \frac{c_l}{k_l}  |u |_{ L^{\infty}(Q(T_0))}  \int^{T_0}_0 \int^{e}_{\hat{e}_{\varepsilon}}  |u_t| dx dt\\
  &\leq 2 \varepsilon^{1/2} T_0^{1/2} \frac{c_l}{k_l}  |u |_{ L^{\infty}(Q(T_0))} \left(  \int^{T_0}_0 \int^1_0|u_t|^2 dx dt \right)^{1/2} \quad \mbox{for } j \geq j_{\varepsilon}.
    \end{align*}   
  Because of \eqref{con-u} and \eqref{con-e}, it holds that
    \begin{align*}  
  I_{14_j} &\leq  \frac{c_l}{k_l}|u_{n_j} |_{ L^{\infty}(Q(T_0))}  \int^{T_0}_0 \int^{e_{n_j}}_{\hat{e}_{\varepsilon}}  |u_{n_jt}| dx dt\\
   &\leq  \frac{c_l}{k_l}  T_0^{1/2}  |u_{n_j}|_{ L^{\infty}(Q(T_0))}  \sup_{0 \leq t \leq T_0}|e_{n_j}(t) - \hat{e}_{\varepsilon}(t)|^{1/2} \left( \int^{T_0}_0 \int^1_0 |u_{n_jt}|^2 dx dt\right)^{1/2}\\
   &\leq C_9 \varepsilon^{1/2} \quad \mbox{for } j \geq j_{\varepsilon},
   \end{align*}  
   where $C_9$ is a positive constant independent of $j$ and $\varepsilon > 0$. Thus, $\limsup_{j \to \infty}|I_{1_j}| \leq C_{10} \sqrt{\varepsilon}$ for $\varepsilon$,  where $C_{10}$ is a positive constant. The similar estimate holds for $I_{2_j}$.
\par Next, we estimate $I_{3_j}$. Easily, we have
\begin{align*}
I_{3_j} &\leq \left| \int^{T_0}_0 (u_{n_jx}(\cdot,1)   -  u_{x}(\cdot,1)) u(\cdot,1)dt \right| +  \left| \int^{T_0}_0 u_{n_jx}(\cdot,1) (u_{n_j}(\cdot,1) -  u(\cdot,1)) dt \right|\\
&=:  I_{31_j} + I_{32_j} \quad \mbox{for } j \geq j_{\varepsilon}.
\end{align*}
Since  $u \in L^{\infty}(Q(T_0))$ and  \eqref{unx1}, we get $I_{31_j} \to 0$ as $j \to \infty$. Thanks to Lemma \ref{lem-c}, \eqref{con-u} and the Gagliardo-Nirenberg inequality, we have
\begin{align*}
I_{32_j} &\leq  T_0^{1/2}  |u_{n_j}-  u|_{ L^{\infty}(Q(T_0))}  \left( \int^{T_0}_0 \left( \frac{2}{\delta^2}|u_{n_jx}|^2_{L^2(1-\delta,1)} +  |u_{n_jxx}|^2_{L^2(1-\delta,1)} \right) dt \right)^{1/2}\\
&\to 0 \quad \mbox{as } j \to \infty.
\end{align*}
 Hence, \eqref{unx} has been proved. 
\par Next, we show  \eqref{unxe}. Easily, we see that
\begin{align*}
&\int^{T_0}_0 | u_{n_jx}(\cdot, e_{n_j}-) - u_x(\cdot, e-) |^2 dt\\
&\leq 9 \int^{T_0}_0 | u_{n_jx}(\cdot, e_{n_j}-) - u_{n_jx}(\cdot, \hat{e}_{\varepsilon}-) |^2 dt  + 9 \int^{T_0}_0 | u_{n_jx}(\cdot, \hat{e}_{\varepsilon}-) - u_{x}(\cdot, \hat{e}_{\varepsilon}-) |^2 dt\\
&\quad \, + 9 \int^{T_0}_0 | u_{x}(\cdot, \hat{e}_{\varepsilon}-) - u_{x}(\cdot, e-) |^2 dt\\
&=: I_{4_j} + I_{5_j} + I_{6_j} \quad \mbox{for } j \geq j_{\varepsilon}.
\end{align*}
Thanks to Lemmas \ref{lem-c} and  \ref{hate}, we see that
\begin{align*}
I_{4_j} &\leq 9 \int^{T_0}_0 |e_{n_j} - \hat{e}_{\varepsilon}| \int^{e_{n_j}}_{\hat{e}_{\varepsilon}} | u_{n_jxx}|^2dx dt\\
&\leq 36 \frac{c_l^2}{k_l^2} \varepsilon \int^{T_0}_0 \int^1_0 |u_{n_jt}|^2 dx dt\\
&\leq R_1 \varepsilon \quad \mbox{for } j \geq j_{\varepsilon},
\end{align*}
where $R_1$ is a positive constant independent of $\varepsilon$ and $j$. Similarly, it holds that $I_{6_j} \leq R_2 \varepsilon$ for some positive constant. Moreover, we have
\begin{align*}
I_{5_j} &= 9 \int^{T_0}_0 \int^{\hat{e}_{\varepsilon}}_0 \left|  \frac{\partial}{\partial y} \frac{y}{\hat{e}_{\varepsilon}} (u_{n_jx}(\cdot, y) - u_{x}(\cdot, y))\right|^2 dy dt\\
&\leq \frac{18}{\delta^2} \left( \int^{T_0}_0 \int^1_0 |u_{n{_jx}} - u_x|^2 dxdt \right)^{1/2}\\
&\quad \, + 18 \left( \int^{T_0}_0 \int^1_0 |u_{n{_jx}} - u_x|^2 dxdt \right)^{1/2} \left( \int^{T_0}_0 \int^1_0 |u_{n_jt} - u_t|^2 dxdt \right)^{1/2} \quad  \mbox{for } j \geq j_{\varepsilon}.
\end{align*}
By Lemmas \ref{lem-c} and \ref{convergence}, \eqref{unx} implies $I_{5_j} \to 0$ as $j \to \infty$. From the above convergences, it holds that $u_{n_jx}(\cdot, e_{n_j}-) \to u_x(\cdot, e-)$ in $L^2(0, T_0)$ as $j \to \infty$.  Similarly, we can show $u_{n_jx}(\cdot, e_{n_j}+) \to u_x(\cdot, e+)$ in $L^2(0, T_0)$ as $j \to \infty$. Thus, \eqref{unxe} has been proved.
 \par Finally, we show \eqref{fbc-z}. Let $\eta \in L^2(0, T_0)$. There exists a sequence $\{ \eta_k\} \subset C^{\infty}([0,T_0])$ such that $\eta_k \to \eta$ in $L^2(0, T_0)$ as $k \to \infty$. Immediately, we have
\begin{align*}
&\left | \int^{T_0}_0( w_{n_j}(\cdot, e_{n_j})e'_{n_j} - w(\cdot,e)e') \eta dt \right |\\
&\leq \left| \int^{T_0}_0( w_{n_j}(\cdot, e_{n_j})e'_{n_j} - w(\cdot,e)e') (\eta - \eta_k) dt \right | +\left | \int^{T_0}_0( w_{n_j}(\cdot, e_{n_j})e'_{n_j} - w(\cdot,e)e'_{n_j}) \eta_k dt \right |\\
&\quad \, +\left| \int^{T_0}_0 w(\cdot, e)(e'_{n_j} - e') \eta_k dt \right |\\
&=: I_{7_j} + I_{8_j} + I_{9_j} \quad  \mbox{for } j \geq 1.
\end{align*}
In order to prove a convergence of $\{ I_{7_j} \}$, we give some estimates for $\{ w_{n_j}(\cdot, e_{n_j}) \}$ in $L^4(0, T_0)$. First, by the  Gagliardo-Nirenberg inequality, we see that
\begin{align*}
&\int^{T_0}_0 |w_{n_j}(t, e_{n_j}(t)|^4 dt\\
&\leq C \int^{T_0}_0 \left( |w_{n_j}(t)|^4_{L^2(0,1)} + |w_{n_j}(t)|^2_{L^2(0,1)}|w_{n_jx}(t)|^2_{L^2(0,1)} \right) dt \quad  \mbox{for } j \geq 1,
\end{align*}
where $C$ is a positive constant. Accordingly,  $\{ w_{n_j}(\cdot, e_{n_j}) \}$ is bounded in $L^4(0, T_0)$. Similarly, we have $w(\cdot,e) \in L^4(0,T_0)$. From these estimates it follows that
\begin{align*}
|I_{7_j}| &\leq \left(|w_{n_j}(\cdot, e_{n_j})|_{L^4(0, T_0)} |e'_{n_j}|_{L^4(0, T_0)} + |w(\cdot, e)|_{L^4(0, T_0)} |e'|_{L^4(0, T_0)} \right) |\eta - \eta_k|_{L^2(0, T_0)}\\
&\hspace{100mm} \mbox{for } j \geq 1.
\end{align*}
Hence, for any $\hat{\varepsilon} > 0$ there exists $k_{\hat{\varepsilon}}$ such that $|I_{7_j}| \leq \hat{\varepsilon}$ for any $j \geq 1$ and $k \geq k_{\hat{\varepsilon}}$. From now on, we fix $k \geq k_{\hat{\varepsilon}}$. Easily, we get
\begin{align*}
|I_{8_j}| &\leq |w_{n_j}(\cdot, e_{n_j}) - w(\cdot, e)|_{L^2(0, T_0)} |e'_{n_j}|_{L^4(0, T_0)} | \eta_k|_{L^4(0, T_0)}\quad \mbox{for } j \geq 1.
\end{align*}
In addition, because of $w(\cdot,e)\eta_k \in L^2(0,T_0)$, $I_{9_j} \to 0$ as $j \to \infty$ for fixed $k$. Thus, we have proved this lemma.
\end{proof}
\begin{proof}[Proof of Theorem \ref{th1}]
 Lemmas \ref{lem-c}, \ref{lem4} and \ref{esen} imply Definition  \ref{def1} (S1), (S2) and (S3). Also, by Lemmas \ref{u-eq}, \ref{fbc-con11} and \ref{fbc-con1}, $u$ satisfies \eqref{EQl}, \eqref{BCl}, \eqref{BCau} and $u(0)=u_0$ on $[0,1]$.  Thanks to \eqref{unxe} and \eqref{fbc-z},  \eqref{FBP1} holds.
 \par  Finally, we shall show \eqref{weak}. Let $\eta \in W^{1,2}(0, T_0; L^2(0,1)) \cap L^{2}(0, T_0; W^{1,2}(0,1))$ and $\eta(T_0)=0$. By multiplying $\eta$ on both sides of  \eqref{EQa} and integrating it with $x$ on $[0, e_{n_j}(t)]$, $[e_{n_j}(t), 1]$, respectively, we have
\begin{align*}
&\int^{T_0}_0 \int^1_0 w_{n_jt} \eta dx dt = \int^{T_0}_0 \int^{e_{n_j}}_0 d_l  w_{n_jxx} \eta dx dt + \int^{T_0}_0 \int^1_{e_{n_j}} d_a w_{n_jxx} \eta dx dt \quad \mbox{for } j \geq 1.
\end{align*}
For the left hand side, easily, we see that
\begin{align*}
&\int^{T_0}_0 \int^1_0 w_{n_jt} \eta dx dt 
= - \int^1_0 w_{0n_j}\eta(0) dx - \int^{T_0}_0 \int^1_0 w_{n_j} \eta_t dx dt  \quad \mbox{for } j \geq 1.
\end{align*}
Because of \eqref{con-w2},  it is clear that $\int^1_0 w_{0n_j}\eta(0) dx \to \int^1_0 w_{0}\eta(0) dx$ and  $\int^{T_0}_0 \int^1_0 w_{n_j} \eta_t dx dt \to \int^{T_0}_0 \int^1_0 w \eta_t dx dt$ as $j \to \infty$. On the right hand side, by integration by parts, it holds that
\begin{align*}
& \int^{T_0}_0 \int^{e_{n_j}}_0 d_l  w_{n_jxx} \eta dx dt + \int^{T_0}_0 \int^1_{e_{n_j}} d_a w_{n_jxx} \eta dx dt \\
&=-d_l \int^{T_0}_0 \int^{e_{n_j}}_0  w_{n_jx} \eta_x dx dt - d_a \int^{T_0}_0 \int^1_{e_{n_j}} w_{n_jx} \eta_x dx dt + d_a \int^{T_0}_0  w_{n_jx}(\cdot,1) \eta(\cdot,1) dt \\
&=: I_{1_j} + I_{2_j} + I_{3_j}   \quad \mbox{for } j \geq 1.
 \end{align*}
On account of $\eta \in L^{2}(0, T_0; W^{1,2}(0,1))$ there exists $\{ \eta_k \} \subset L^{2}(0, T_0; W^{2,2}(0,1))$ such that $\eta_k \to \eta$ in $L^{2}(0, T_0; W^{1,2}(0,1))$ as $k \to \infty$.  On $I_{1_j}$, we see that
 \begin{align*}
& \left| \int^{T_0}_0 \int^{e_{n_j}}_0  w_{n_jx} \eta_x dx dt - \int^{T_0}_0 \int^e_0  w_x \eta_x dx dt \right|\\
&\leq \left|  \int^{T_0}_0 \int^{e_{n_j}}_0  w_{n_jx} \eta_x dx dt -  \int^{T_0}_0 \int^{e_{n_j}}_0  w_{n_jx} \eta_{kx} dx dt \right|\\
&\quad \, + \left|  \int^{T_0}_0 \int^{e_{n_j}}_0  w_{n_jx} \eta_{kx} dx dt -  \int^{T_0}_0 \int^e_0  w_{n_jx} \eta_{kx} dx dt \right|\\
&\quad \, + \left|  \int^{T_0}_0 \int^e_0  w_{n_jx} \eta_{kx} dx dt -  \int^{T_0}_0 \int^e_0  w_x \eta_{kx} dx dt\right|\\
&\quad \, +\left|  \int^{T_0}_0 \int^e_0  w_x \eta_{kx} dx dt  -  \int^{T_0}_0 \int^e_0  w_x \eta_x dx dt \right|\\
&=: I_{11_j} + I_{12_j} + I_{13_j}  + I_{14_j}  \quad \mbox{for } j \geq 1.
  \end{align*}
  By Lemma \ref{lem4} and H\"{o}lder's i inequality, we have
 \begin{align*}
 I_{11_j} &\leq  \left( \int^{T_0}_0 \int^1_0 |w_{n{_jx}} |^2 dxdt \right)^{1/2} \left( \int^{T_0}_0 \int^1_0 |\eta_x - \eta_{kx}|^2 dxdt \right)^{1/2}\\
 &\to 0 \quad \mbox{as } k \to \infty.
   \end{align*}  
   For $I_{12_j}$, by integration by parts, we see that
    \begin{align*}
 I_{12_j} &\leq  \left|  \int^{T_0}_0 \int^{e_{n_j}}_e  w_{n_jx} \eta_{kx} dx dt \right|\\
 &\leq  \left|  \int^{T_0}_0   w_{n_j}(\cdot,e_{n_j}) \eta_{kx}(\cdot,e_{n_j})  dt  -  \int^{T_0}_0   w_{n_j}(\cdot,e) \eta_{kx}(\cdot,e)  dt \right.\\
 &\left. \quad \, -  \int^{T_0}_0 \int^{e_{n_j}}_e  w_{n_j} \eta_{kxx} dx dt \right|\\
 &\leq \left|  \int^{T_0}_0   (w_{n_j}(\cdot,e_{n_j}) -  w_{n_j}(\cdot,e)) \eta_{kx}(\cdot,e_{n_j})  dt\right|\\
&\quad \,  + \left|  \int^{T_0}_0   w_{n_j}(\cdot,e) (\eta_{kx}(\cdot,e_{n_j}) -\eta_{kx}(\cdot,e))  dt \right| + \left|  \int^{T_0}_0 \int^{e_{n_j}}_e  w_{n_j} \eta_{kxx} dx dt \right|\\
&=: J_{1_j} + J_{2_j} + J_{3_j}   \quad \mbox{for } j \geq 1.
   \end{align*}  
   On $J_{1_j}$, it holds that
      \begin{align}
J_{1_j} &\leq   \left|  \int^{T_0}_0 \left(  \int^{e_{n_j}}_{e} w_{n_j x}(\cdot, x) dx\right) \eta_{kx}(\cdot,e_{n_j})  dt\right|\notag\\
&\leq \int^{T_0}_0  |e_{n_j} - e|^{1/2} | w_{n_j x}|_{L^2(0,1)}|\eta_{kx}(\cdot,e_{n_j})| dt\notag\\\
&\leq  \sup_{0 \leq t \leq T_0}|e_{n_j}(t) - e(t)|^{1/2} |w_{n_j x}|_{L^2(0,T_0;L^2(0,1))} |\eta_{kx}(\cdot,e_{n_j})|_{L^2(0,T_0)} \quad \mbox{for } j \geq 1.\label{conJ1}
     \end{align}
  Thanks to Lemmas \ref{lem4}  and \ref{convergence}, we have $J_{1_j} \to 0$ as $j \to \infty$ for each $k \geq 1$. By Lemmas \ref{lem4}, \ref{convergence} and the Gagliardo-Nirenberg inequality, we obtain
    \begin{align}
J_{2_j} &\leq  \sup_{0 \leq t \leq T_0}|e_{n_j}(t) - e(t)|^{1/2}  |\eta_{kxx}|_{L^2(0,T_0;L^2(0,1))}  \left(  \int^{T_0}_0 |w_{n_j}(\cdot,e)|^2 dt \right)^{1/2}\notag\\\
&\leq C  \sup_{0 \leq t \leq T_0}|e_{n_j}(t) - e(t)|^{1/2}  |\eta_{kxx}|_{L^2(0,T_0;L^2(0,1))}\notag\\\
&\quad \, \times \left( \int^{T_0}_0 \left( |w_{n_j}(t)|^2_{L^2(0,1)} + |w_{n_j}(t)|_{L^2(0,1)}|w_{n_jx}(t)|_{L^2(0,1)} \right) dt \right)^{1/2}\notag\\\
&\to 0 \quad \mbox{as } j \to \infty \mbox{ for each } k \geq 1,\label{conJ2}
     \end{align}     
     where $C$ is a positive constant. Because of Lemma  \ref{convergence} and Sobolev embedding, we see that
         \begin{align}
J_{3_j} &\leq   |\eta_{kxx}|_{L^2(0,T_0;L^2(0,1))}  \left(  \int^{T_0}_0 \int^{e_{n_j}}_e  |w_{n_j}|^2  dx dt \right)^{1/2}\notag\\\
&\leq   |\eta_{kxx}|_{L^2(0,T_0;L^2(0,1))}  \sup_{0 \leq t \leq T_0}|e_{n_j}(t) - e(t)|^{1/2} |w_{n_j}|_{L^2(0,T_0;W^{1,2}(0,1))}\notag\ \\
&\to 0 \quad \mbox{as } j \to \infty \mbox{ for each } k \geq 1.\label{conJ3}
           \end{align}  
Therefore, \eqref{conJ1} - \eqref{conJ3} imply that $ I_{12_j} \to 0$ as $j \to \infty$ for each $k \geq 1$. Due to \eqref{con-w1}, $ I_{13_j} \to 0$ as $j \to \infty$ for each $k \geq 1$. Thanks to $\eta_{kx} \to \eta_x$ in $L^{2}(0, T_0; L^2(0,1))$ as $k \to \infty$, $ I_{14_j} \to 0$ as $k \to \infty$. Hence, these convergences yield that $ I_{1_j} \to -d_l \int^{T_0}_0 \int^{e}_0  w_{x} \eta_x dx dt$ as $j \to \infty$. Similarly, $ I_{2_j} \to -d_a \int^{T_0}_0 \int^{1}_e  w_{x} \eta_x dx dt$ as $j \to \infty$.
\par Finally, we show $I_{3_j} \to  \int^{T_0}_0  (b_1 p(u(\cdot,1) + \theta_c) -b_2p(u_b)) \eta(\cdot,1) dt$ as $j \to \infty$. By \eqref{BCaw}, \eqref{con-u} and the Gagliardo-Nirenberg inequality, we obtain
  \begin{align*}
&\left|  d_a \int^{T_0}_0  w_{n_jx}(\cdot,1) \eta(\cdot,1) dt +  \int^{T_0}_0  (b_1 p(u(\cdot,1) + \theta_c) -b_2p(u_b)) \eta(\cdot,1) dt \right|\\
&\leq b_1 \left|  \int^{T_0}_0  (p(u_{n_j}(\cdot,1) ) - p(u(\cdot,1) ))  \eta(\cdot,1) dt  \right|\\
&\leq b_1 M_p   \int^{T_0}_0  |u_{n_j}(\cdot,1)  - u(\cdot,1) | |\eta(\cdot,1)| dt  \\
&\leq b_1 M_p   \left( \int^{T_0}_0  |u_{n_j}(\cdot,1)  - u(\cdot,1) |^2 dt \right)^{1/2}  \left( \int^{T_0}_0  |\eta(\cdot,1)|^2 dt \right)^{1/2} \\
&\to 0 \quad \mbox{as } j \to \infty.
   \end{align*}   
From the argument as above, it follows \eqref{weak}.  Thus, we can prove Theorem \ref{th1}.

\end{proof}

\section{Conclusion}
In this paper, we have established existence of solutions by introducing a weak formulation to $w$. The proof relies on a standard approximation method for initial data, with the crucial step being the convergence of approximate solutions via uniform estimates derived from the Gagliardo-Nirenberg inequality. In  the future work, we plan to investigate the behavior of the free boundary, in particular, whether it convergences to the fixed boundary in finite time.





\end{document}